\newcommand{\bX}{X}
\newcommand{\bY}{Y}
\newcommand{\stkout}[1]{\ifmmode\text{\sout{\ensuremath{#1}}}\else\sout{#1}\fi}
\newcommand{\comment}[1]
\theoremstyle{remark}
\newtheorem{theorem}{Theorem}[section]
\newtheorem{lemma}[theorem]{Lemma}
\newtheorem{example}{Example}[section]
\begin{document}

\title{Diagonal nonlinear transformations preserve structure in covariance and precision matrices}

%\author{Rebecca E. Morrison, Ricardo Baptista, Estelle L. Basor}
\author{Rebecca E. Morrison}
\address{Department of Computer Science,
        University of Colorado Boulder,
        Boulder, CO 80309, USA}
\email{rebeccam@colorado.edu}

\author{Ricardo Baptista}
\address{Center for Computational Engineering,
        Massachusetts Institute of Technology,
        Cambridge, MA 02139, USA}
\email{rsb@mit.edu}

\author{Estelle L. Basor}
\address{American Institute of Mathematics, San Jose, CA 95112, USA}
\email{ebasor@aimath.org}

\begin{abstract}%   <- trailing '%' for backward compatibility of .sty file
    For a multivariate normal distribution, the sparsity of the covariance and precision matrices
    encodes complete information about independence and conditional independence properties. For
    general distributions, the covariance and precision matrices reveal correlations and so-called
    partial correlations between variables, but these do not, in general, have any correspondence
    with respect to independence properties. In this paper, we prove that, for a certain class of
    non-Gaussian distributions, these correspondences still hold, exactly for the covariance and
    approximately for the precision. The distributions---sometimes referred to as
    ``nonparanormal''---are given by diagonal transformations of multivariate normal random variables. We provide
    several analytic and numerical examples illustrating these results.

    \smallskip \smallskip
    \noindent \textbf{Keywords:}
graph learning; conditional independence; sparse inverse covariance; nonparanormal distributions
\end{abstract}

\maketitle

\section{Introduction}\label{sec:int}
Among many appealing properties of multivariate normal distributions, their second moment matrix
and its inverse contain complete information about the independence and conditional independence
properties. Specifically, a zero in the $ij$th entry of the covariance matrix means that variables
$i$ and $j$ are marginally independent, while a zero in the $ij$th entry of the precision (inverse
covariance) means that the two are conditionally independent. For high-dimensional Gaussian data sets, it is
often of interest to estimate either a sparse covariance matrix, or sparse precision matrix, or, in
some cases, both~\cite{fan2016overview}.

In general, this correspondence---between the second moment matrix and the independence properties,
and between the inverse and conditional independence properties---does not hold for non-Gaussian
distributions. Tests to determine independence and conditional independence become more complex than
matrix estimation: the complexity of exhaustive pairwise testing techniques scales exponentially
with the number of variables \cite{lauritzen1996graphical}; other methods compute scores or combine
one-dimensional conditional distributions for the exponential family \cite{lin2016estimation,
yang2015graphical, suggala2017expxorcist}; another approach (by two of the current co-authors)
identifies conditional independence for arbitrary non-Gaussian distributions from the Hessian of the
log density, but is so far computationally limited to rather small graphs
\cite{baptista2021learning, morrison2017beyond}. Thus, it is of broad interest to analytically extract marginal and
conditional independence properties of a distribution \textit{a priori} to any estimation procedure.

In this paper, we show that the above correspondences between independence and sparsity of covariance and precision matrices are
approximately preserved for a broad class of distributions, namely those given by certain diagonal and
mean-preserving transformations of a multivariate normal (sometimes referred to as ``nonparanormal''
\cite{liu2009nonparanormal}). In particular, these distributions display the following behavior:
\begin{enumerate}
    \item Variables $i$ and $j$ are marginally independent if and only if the $ij$th entry of the covariance is
        zero, equivalent to the normal case.
    \item Variables $i$ and $j$ are conditionally independent if and only if the $ij$th entry of the
        precision is small, where ``small'' will be made precise later on.
\end{enumerate}
In other words, under some assumptions, a Gaussian approximation to a non-Gaussian distribution of
this form will exactly recover the marginal independence structure, and approximately recover the
conditional independence structure, which is often summarized as an undirected graphical model.  In
fact, numerical observations of the above phenomenon motivated the current work. In several
numerical examples in previous work~\cite{baptista2021learning}, algorithms built for Gaussian graph
learning recovered the graphs of non-Gaussian distributions, but without justification.  Trying to
explain why those algorithms still worked led to the results in this paper.
%\fix{When this does not hold, While diagonal transformations of multivariate normal variables are
%known to preserve conditional independence properties, they do not preserve the sparse  entries in
%the inverse covariance matrix. Thus, when estimating the inverse covariance, it is possible to get
%a dense

Covariance estimation for general non-Gaussian data sets is of course standard procedure, to (at least) identify
correlations between variables. Perhaps less common but not unusual is precision estimation for
general data sets, to identify a ``partial correlation graph''---a sort of first approximation of
the conditional independence properties \cite{koanantakool2018communication, rothman2008sparse,
friedman2008sparse, banerjee2008model}. %\fix{While diagonal transformations of multivariate normal variables are known to preserve conditional independence properties (see Theorem in ~\cite{}), they do not generally preserve the sparse entries in the inverse covariance matrix. Thus, when estimating the inverse covariance, it is not generally known how estimated partial correlations coincide with the true conditional independence properties}. 
This work provides a new mathematical foundation to connect the sparsity of computed partial
correlations to the conditional independence properties for a common class of non-Gaussian
distributions.

The rest of the paper is organized as follows. Section~\ref{sec:trans} proves exactly how the
entries of the covariance are transformed, with the result that the covariance structure is exactly
preserved by the diagonal transformation, and provides an explicit formula to calculate higher
moments of Gaussian random variables. Extra related computations are given in
Appendix~\ref{sec:app}. Section~\ref{sec:inv} computes the inverse covariance matrix after the
transformation, showing that the conditional independence structure is approximately preserved.
Numerical results for specific graphs are given in Section~\ref{sec:ex}, and we conclude with
Section~\ref{sec:con}.

\section{Moments after transformation}\label{sec:trans} Consider a multivariate normal random
variable $X \in \mathbb{R}^{d}$ with density $\rho = \mathcal{N}(0, \Sigma_\rho)$. Let $\Gamma_\rho
= \Sigma_\rho^{-1}$ be the inverse covariance or precision matrix of $X$. For $\rho$ satisfying some
conditional independence properties, the precision matrix $\Gamma_\rho$ will have zero entries.
Furthermore, the sparsity of $\Gamma_{\rho}$ defines the minimal I-map of $\rho$, i.e., the
minimal undirected graphical model satisfying the conditional independence properties of $\rho$.

Now apply a (nonlinear) univariate transformation $f\colon \mathbb{R} \rightarrow \mathbb{R}$ to
each element of $X$, i.e., $Y_i = f(X_i)$. We refer to the overall mapping as a diagonal
transformation of $X$ and we denote this by $f(X)$.
%\footnote{\fix{Here we  use the notation $f(X)$ to denote that the same transformation $f$ is applied to each element of $X$.}}. 
Let us then say that $Y = f(X) \sim \pi$ where $\pi$ is the push-forward density of $\rho$ through
the diagonal transformation. For a general nonlinear $f$, $Y$ will have a non-Gaussian distribution.
Nevertheless, its mean is given by $\mu_{\pi} = \mathbb{E}_{\pi}[\bY] = \mathbb{E}_{\rho}[f(\bX)]$
and its covariance is given by $\Sigma_{\pi} = \mathbb{E}_{\pi}[(\bY - \mu_\pi)(\bY - \mu_\pi)^{T}]
= \mathbb{E}_{\rho}[f(\bX)f(\bX)^T] - \mu_\pi\mu_\pi^{T}$.
For simplicity and without loss of generality, we will assume that $f$ is
mean-preserving, so that $\mu_{\pi} = 0$ and $\Sigma_\pi =
\mathbb{E}_\rho[f(X)f(X)^T]$. We let $\Gamma_\pi = \Sigma_\pi^{-1}$ denote the precision matrix of
$\pi$. Also note that a moment-matching Gaussian approximation to $\pi$ will have mean $\mu_\pi$ and
covariance $\Sigma_\pi$.
%\todo{Here we are overloading some of the
%notation because $f$ is applied to a vector $X$, but in the rest of the paper it is applied to
%univariate variables. I added a small note above to clarify this. We can also use other notation for
%the vector function, if you think it would be better. REM: Good catch, I think the note is good.}
%\todo{Can we reference the nonparanormal paper as doing this or say that this is
%somehow without loss of generality?} REM: Yes, I added it and removed this as future work b/c
%fairly trivial.
%\sout{The precision matrix $\Gamma_\rho$ encodes the minimal I-map of $\rho$ (the minimal undirected
%graphical model satisfying conditional independence properties of $\rho$). Given sparsity in
%$\Gamma_\rho$, we want to know the approximate sparse structure of $\Gamma_\pi$.}
%\todo{Maybe we can move this paragraph to the beginning of section 3 to make the focus of this section more focused}

Our ultimate goal is to derive conditions under which a Gaussian approximation to $\bY$
will approximately preserve the conditional independence properties (i.e., the corresponding entries
of the inverse covariance matrix of $\bY$ will be small). To do so, in this section we first
characterize the first and second moments of $Y$.

\subsection{Univariate moments after transformation}
If $f$ is a smooth function of $X_{i}$, we can expand it using a Taylor series expansion around
$\mathbb{E}[X_{i}] = 0$ as
\begin{equation} \label{eq:taylor_series}
    f(x) = \sum_{k=0}^{\infty} \frac{f^{(k)}(0)}{k!} x^{k}.
\end{equation}
Using this expansion we compute the first two moments of $Y_{i} = f(X_{i})$ by taking advantage
of the linearity of the expectation operator. They are
\begin{equation} \label{eq:mean_expansion}
    \mathbb{E}_{\rho}[f(X_{i})] = \mathbb{E} \left[ \sum_{k=0}^{\infty} \frac{f^{(k)}(0)}{k!}
    X_{i}^{k} \right] = \sum_{k=0}^{\infty} \frac{f^{(k)}(0)}{k!} \mathbb{E}_{\rho}[X_{i}^{k}],
\end{equation}
\begin{equation} \label{eq:var_expansion}
    \mathbb{E}_{\rho}[f(X_{i})^2] = \mathbb{E} \left[\left( \sum_{k=0}^{\infty}
    \frac{f^{(k)}(0)}{k!} X_{i}^{k} \right) \left(\sum_{l=0}^{\infty} \frac{f^{(l)}(0)}{l!}
    X_{i}^{l} \right)\right] = \sum_{k=0}^{\infty} \sum_{l=0}^{\infty}
    \frac{f^{(k)}(0)f^{(l)}(0)}{k!l!} \mathbb{E}_{\rho}[X_{i}^{k+l}].
\end{equation}
For a mean-zero Gaussian random variable $X_{i}$ with variance $(\Sigma_{\rho})_{ii} \coloneqq
\sigma_{ii}^2$, its moments are given by
\begin{equation}\label{moment}
    \mathbb{E}_{\rho}[X_{i}^{k}] = \left\{ \begin{array}{ll} 0 & \text{if $k$ is odd} \\
    \sigma_{ii}^{k}(k-1)!! & \text{if $k$ is even} \end{array} \right.,
\end{equation}
where $k!!$ denotes the double factorial of $k$. Next we ignore any terms
in~\eqref{eq:mean_expansion} and~\eqref{eq:var_expansion} that involve odd terms in the exponent $k$ and $k+l$.
Furthermore, we can write the first and second moments only in terms of the variance of $X_{i}$. The
first moment is given by
\begin{equation} \label{eq:univariate_first_moment}
    \mathbb{E}_{\rho}[f(X_{i})] = \sum_{\substack{k=0 \\ \text{even } k}}^{\infty}
    \frac{f^{(k)}(0)}{k!} \mathbb{E}_{\rho}[X_{i}^{k}] = \sum_{\substack{k=0 \\ \text{even } k}}^{\infty} a_{k} \sigma_{ii}^{k},
\end{equation}
where $a_{k} \coloneqq f^{(k)}(0) \frac{(k-1)!!}{k!}$ is a constant depending on the higher-order
derivatives of $f$ and the index $k$. Similarly, after a re-parameterization of the indices over the
sum $n=k+l$, the second moment is given by
\begin{align}
    \mathbb{E}_{\rho}[f(X_{i})^2] &= \sum_{n=0}^{\infty} \sum_{p=0}^{n}
    \frac{f^{(p)}(0)f^{(n-p)}(0)}{p!(n-p)!} \mathbb{E}_{\rho}[X_{i}^{n}] \nonumber \\
    &= \sum_{\substack{n=0 \\ \text{even } n}}^{\infty} \mathbb{E}_{\rho}[X_{i}^{n}] \left(\sum_{p=0}^{n}
        \frac{f^{(p)}(0)f^{(n-p)}(0)}{p!(n-p)!} \right)
    = \sum_{\substack{n=0 \\ \text{even } n}}^{\infty} b_{n} \sigma_{ii}^{n}, \label{eq:univariate_second_moment}
\end{align}
where $b_{n} \coloneqq (n-1)!!\left(\sum_{p=0}^{n} \frac{f^{(p)}(0)f^{(n-p)}(0)}{p!(n-p)!}\right) =
(n-1)!!\frac{g^{(n)}(0)}{n!}$ with $g(x) = f^{2}(x)$. 

Of course, the above computations assume convergence of each series. In the next section we will
give criteria for these series to converge.
%\begin{remark}
%    In the non-paranormal paper, it was assumed that the mean and variance are preserved under the
%    transformation (i.e., $\mathbb{E}_{\rho}[f(X_{i})] = \mathbb{E}_{\rho}[X_{i}]$ and
%    $\mathbb{V}_{\rho}[f(X_{i})] = \mathbb{V}_{\rho}[X_{i}] = (\Sigma_{\rho})_{ii}$). Without this
%    assumption, we could use~\eqref{eq:univariate_first_moment}
%    and~\eqref{eq:univariate_second_moment} to derive the scaling of all entries along the diagonal
%    of $\Sigma_{\pi}$.
%\end{remark}

\subsection{Multivariate moments after transformation}
With a similar argument as above, we can also derive the second moment matrix of the transformation.
Using the Taylor series expansions in~\eqref{eq:taylor_series}, the second moment matrix is given by
\begin{align}
    \mathbb{E}_{\rho}[f(\bX)f(\bX)^{T}] &= \mathbb{E}_\rho\left[ \sum_{k=0}^{\infty}
    \sum_{l=0}^{\infty} \frac{f^{(k)}(0)f^{(l)}(0)}{k!l!} X^{k}(X^{l})^{T} \right] \nonumber \\
    &=    \sum_{n=0}^{\infty} \sum_{p=0}^{n} \frac{f^{(p)}(0)f^{(n-p)}(0)}{p!(n-p)!} \mathbb{E}_{\rho}
    [X^{p}(X^{n-p})^{T}] \label{eq:multivariate_moment}
\end{align}
where in the second line we re-parameterize and switch the order of summations and expectation.

Using Isserlis' theorem (or Wick's probability theorem)~\cite{isserlis1918formula}, we compute
the moments of the product of Gaussian random variables as:
\begin{align} \label{eq:Wicks_thm}
    \mathbb{E}_{\rho} &[X_{i}^{p}X_{j}^{n-p}] \nonumber \\
    &=\left\{ \begin{array}{ll} 0 & \text{$n$ odd}\\ \vspace{1em}
    \sum_{\substack{k = p\\ \text{ by }-2}}^0 (p-k-1)!! {p \choose k}{n-p \choose k} k! (n-p-k-1)!!\, \sigma_{ii}^{(p-k)/2} \sigma_{ij}^k
        \sigma_{jj}^{(n-p-k)/2} & \text{$n$ even, $p \leq n/2$}\\ \vspace{1em}
    \sum_{\substack{k = n-p\\ \text{ by }-2}}^0 (p-k-1)!! {p \choose k}{n-p \choose k}
    k! (n-p-k-1)!!\, \sigma_{ii}^{(p-k)/2} \sigma_{ij}^k
        \sigma_{jj}^{(n-p-k)/2} & \text{$n$ even, $p > n/2$}
    \end{array} \right. .
\end{align}

%We don't use moment-generating function, so will comment out
%Alternatively, we can use the moment-generating function of the marginal Gaussian, which is given by
%$M_{\bX}(t) = \mathbb{E}[e^{t^{T}\bX}] = \exp(\frac{1}{2}t^{T}\Sigma_{\rho}t)$ for $t \in
%\mathbb{R}^{d}$. The central moments are then given by
%\begin{equation*}
%    \mathbb{E}_{\rho}[X_{i}^{k}X_{j}^{l}] = \frac{\partial^{k+l}M_{\bX}(t)}{\partial t_{i}^{k}
%    \partial t_{j}^{l}}.
%\end{equation*}

\begin{example}
For the transformation $f(x) = x^3$, the third order truncation of the expansion
    in~\eqref{eq:multivariate_moment} is exact and the derivatives evaluated at zero are all zero
    with the exception of $f'''(0) = 6$. Using this result together with $\mu_\rho = \mathbb{E}_\rho[f(X)] = 0$, we have that $$(\Sigma_{\pi})_{ij} =
    \frac{f'''(0)^2}{36} \mathbb{E}_\rho[\bX_{i}^3\bX_{j}^3] = \mathbb{E}_\rho[\bX_{i}^3\bX_{j}^3] =
    9\sigma_{ii}\sigma_{jj}\sigma_{ij} + 6\sigma_{ij}^3.$$ %where $\sigma_{ij} = (\Sigma_{\rho})_{ij}.$
\end{example}

%If we assume that the transformation satisfies $f(0) = 0$, we could write the covariance of the
%transformed variables as
%\begin{equation} \label{eq:approx_covariance}
%    \mathbb{E}_{\rho}[f(\bX)f(\bX)^{T}] = f^{(1)}(0)^2 \mathbb{E}_{\rho}[\bX\bX^{T}] +
%    \underbrace{\sum_{n\geq4} \sum_{p=0}^{n} \frac{f^{p}(0)f^{n-p}(0)}{p!(n-p)!}
%    \mathbb{E}_{\rho}[\bX^{p}(\bX^{n-p})^{T}]}_{\Delta}, \\
%\end{equation}
%where $\Delta \in \mathbb{R}^{d \times d}$ is a matrix containing moments of order greater than or
%equal to $4$ after using the property that first and third-order moments are all zero.

Just as what was done above for $f(x) = x^{3},$ one of our goals is to compute the general explicit
form of $(\Sigma_{\pi})_{ij}$. Before we go any further, however, we should prove that the above
series actually makes sense, that is, that it converges.  We do this now.
\begin{lemma}
Suppose that the derivatives of the function $f$ are all bounded at zero. Then the series
$$ \sum_{n\geq2} \sum_{p=0}^{n} \frac{f^{(p)}(0)f^{(n-p)}(0)}{p!(n-p)!}
    \mathbb{E}_{\rho}[\bX^{p}(\bX^{n-p})^{T}], $$
    converges.
\end{lemma}
\proof
We consider 
\begin{equation} \label{eq:sum_testing_convergence}
\sum_{n \geq 2} \sum_{p=0}^{n} \frac{f^{(p)}(0)f^{(n-p)}(0)}{p!(n-p)!}\mathbb{E}_{\rho}[X_{i}^{p}X_{j}^{n-p}]
\end{equation}
for fixed $i$ and $j.$ 
To begin, we use the Cauchy-Schwartz inequality with
\begin{equation*} %\label{sum}
\mathbb{E}_{\rho}[X_{i}^{p}X_{j}^{n-p}] = \int X_{i}^{p}X_{j}^{n-p} \rho(X) \,\textrm{d}X.%\int \cdots \int X_{i}^{p}X_{j}^{n-p} \exp( -\frac{1}{2}X \Sigma_{\rho}^{-1} X^{T}) \,d\mu.
\end{equation*}
The square of this expectation is bounded by the product of integrals
\begin{equation*}
\left(\int X_{i}^{2p} \rho(X) \,\textrm{d}X \right) \left(\int X_{j}^{2n-2p} \rho(X) \,\textrm{d}X \right).
%\int \cdots \int X_{i}^{2p}\exp( -\frac{1}{2}X \Sigma_{\rho}^{-1} X^{T}) \,d\mu \int \cdots \int X_{j}^{2n-2p} \exp( -\frac{1}{2}X \Sigma_{\rho}^{-1} X^{T}) \,d\mu.
\end{equation*}
%\todo{I replaced $\exp(-0.5X^T\Sigma_\rho^{-1}X)$ in the integral by $\rho$. I thought this was cleaner because we should also include the normalizing constant that becomes quite long!}
From~\eqref{moment} we know that this last product is then
\[ \sigma_{ii}^{2p} (2p-1)!! \,\,\sigma_{jj}^{2(n-p)}(2(n-p)-1)!! .\]

Let $M$ be a bound on $\sigma_{ii}$ for all $i$ and recall we are assuming that the derivatives are
bounded. Then the sum in~\eqref{eq:sum_testing_convergence} is bounded by
\begin{equation} \label{eq:series_bounded_sum}
\sum_{n \geq 2} \sum_{p=0}^{n} \frac{((2p-1)!! (2(n-p)-1)!!)^{1/2}}{p!(n-p)!}M^{n} .
\end{equation}
%Suppose that $2p \geq n.$ We also know that $(2p-1)!! = \frac{(2p-1)!}{(p-1)! 2^{p-1}} = \frac{(2p)!}{p! 2^{p}}.$ From Sterling's formula we have that 
%asymptotically
%\[ \frac{(2p)!^{1/2}}{(p! 2^{p})^{1/2}p!} \sim    \frac{(2e)^{p/2}}{p^{p/2} 2^{1/4}(\pi p)^{1/2}}.\]
Note that $$(2p-1)!! = \frac{(2p-1)!}{(p-1)! 2^{p-1}} = \frac{(2p)!}{p! 2^{p}}  = \Gamma( p + 1/2)
2^{p} \pi^{-1/2},$$ where the last equality follows from the duplication formula for the Gamma
function $\Gamma$. Thus, the sum in~\eqref{eq:series_bounded_sum} is given by
\begin{equation*}
\sum_{n \geq 2}  2^{n/2} M^{n} \pi^{-1/2}\sum_{p=0}^{n} \frac{(\Gamma( p +1/2) \Gamma( n - p + 1/2))^{1/2}}{p!(n-p)!}.
\end{equation*}
Let us consider the inner sum over $p.$ Using the symmetry in $p$ and $n-p,$ this is at most
\begin{equation} \label{eq:sum_gamma_functions}
    2\,\sum_{p=0}^{\left \lfloor{\frac{n}{2}}\right\rfloor + 1} \frac{(\Gamma( p +1/2) \Gamma( n - p + 1/2))^{1/2}}{p!(n-p)!}.
\end{equation}
The ratio $$\frac{\Gamma(k + 1/2)^{1/2}}{\Gamma(k + 1)}$$ decreases as $k$ increases and hence the sum in~\eqref{eq:sum_gamma_functions}
is bounded by
\begin{equation*}
    2\,\frac{\Gamma(n - \left\lfloor{n/2}\right\rfloor - 1/2)^{1/2}}{\Gamma(n
    -\left\lfloor{n/2}\right\rfloor)}\sum_{p=0}^{\infty} \frac{\Gamma( p +1/2)^{1/2}}{p!}.
\end{equation*}
The sum over $p$ converges and hence it is bounded. The convergence is easily seen with the ratio
test. Thus we are left with a constant multiplied by
\begin{equation*}
    \sum_{n \geq 2}  2^{n/2} M^{n} \,\frac{\Gamma(n - \left \lfloor{n/2}\right\rfloor  -
    1/2)^{1/2}}{\Gamma(n - \left \lfloor{n/2}\right \rfloor )},
\end{equation*}
which also converges by the ratio test after grouping each even indexed term with the following odd
indexed term.
%Using the inequality
%\[ e\left(\frac{k}{e}\right)^{k} < k! < e\left(\frac{k+1}{e}\right)^{k+1} \]
%it is not hard to show that 
%\[ \frac{(2p)!^{1/2}}{(p! \,2^{p})^{1/2}\,p!} <  \frac{(2e)^{p/2}\,p^{1/2}}{p^{p/2}},\]
%Now if we break our sum into sums from $0$ to $[\frac{n}{2}]$ and then from $[\frac{n}{2}] +1$ to $n$ we have that the sum can be written 
%\begin{equation}
%2 \sum_{n \geq 4} M^{n}\sum_{p=0}^{[\frac{n}{2}]} \frac{(2e)^{p/2}\,p^{1/2}}{p^{p/2}} \frac{(2e)^{n/4}\,(n/2)^{1/2}}{(n/2)^{n/4}} 
%\end{equation}
%shows that the terms with respect to $n$ are of order 
%$$ \frac{2^{n/2}e^{n/4}}{n^{n/4}n^{1/2}},$$
%and that the series converges.
\endproof
To end this section we note that the derivatives of $f$ evaluated at zero need not be bounded to establish convergence. If
the derivatives %of $f$ evaluated at zero
grow as a power $n^{\alpha}$ or even geometrically as
$k^{n}$ the same computation would work.

\subsection{Transformation of matrix elements}
This focus of this section is to describe how individual matrix elements of the covariance matrix
get transformed when $f$ is applied to the random variable $X$. We assume that $f(0) = 0$ and thus $g(0) = 0$. For the
diagonal entries of the covariance, we already know the answer from
(\ref{eq:univariate_second_moment}). Let $(\Sigma_{\pi})_{ii} = \tau_{ii}.$ 
%\todo{Maybe we can keep
%the $\Sigma_{\pi}$ notation. The $\tau$ is sparsely used, and adds a new thing to remember (at least
%for me).} %REM: \tau is used later on quite a bit, I think okay here
Then 
$$\tau_{ii} = \sum_{\substack{n=2 \\ \text{even } n}}^{\infty}(n-1)!!\frac{g^{(n)}(0)}{n!}  \sigma_{ii}^{n},$$ 
where $g(x) = f^{2}(x)$. This means that if $\sigma_{ii} = 1$ (or is close to one) then the diagonal elements of
the transformed covariance are all equal to a constant factor that only depends on the derivatives of $f$ at zero. Here is an example. 

\begin{example}
Suppose $f(x) = \sin(x)$, and that $\sigma_{ii} = 1.$ Then $f^2(x) = \frac{1 - \cos 2 x}{2}$ and
we have $$\tau_{ii} = \sum_{\substack{n=2 \\ \text{even } n}}^{\infty}(n-1)!!\frac{g^{(n)}(0)}{n!} =
\sum_{k=1}^{\infty}\frac{(-1)^{k+1}2^{k-1}(2k)!}{k!\,(2k)!} = \frac{ 1- e^{-2}}{2} \text{ for all $i$.}$$ 
\end{example}

We now examine what happens to the other elements of the covariance after the transformation. We
will consider for now the case of an odd function $f$. %well-known I think..., i.e., $f(x) =-f(-x)$}.
%Then we have the following:

\begin{theorem}\label{thm:main}
Suppose $f$ is given by 
\[ f(x)  = \sum_{u \geq 0} \frac{f^{(2u + 1)}(0)}{ (2u + 1)!} x^{2u + 1}.\] Define a new function
\[ F_{k}(x)  =  \sum_{u \geq 0} \frac{f^{(2u + k)}(0)}{ u!} x^{u},\]
and $G_{kij}(x) = F_{k}(\sigma_{ii}x)F_{k}(\sigma_{jj}x).$
Then $\sigma_{ij}$ is transformed to
\begin{equation} \label{eq:transformed_moment}
\tau_{ij} = \sum_{\text{odd} \,\,k} G_{kij}(1/2) \frac{\sigma_{ij}^{k}}{k!}.
\end{equation}%\todo{May be helpful to define a variable for this transformation, like $\tau$?}
%\todo{I think it may be helpful to add a sentence at the end to explain why only the odd terms for $k$ appear in the formula. I believe it's because the derivatives $F_k$ are zero for even $k$.}
\end{theorem}
\proof
First note that for an odd function $f$, $f^{(l)}(0) = 0$ if $l$ is even. After inserting formula~\eqref{eq:Wicks_thm} in~\eqref{eq:multivariate_moment}, we combine all the terms that correspond to $\sigma_{ij}^{k}$. By
simplifying the double factorials, one has that the coefficient of $\sigma_{ij}^{k}$ is given by
\begin{equation} \label{eq:coefficient_sigmaijk}
\sum_{\substack{n \,\,\text{even} \\ n/2 \geq k}} \sum_{\substack{p \,\,\text{odd}\\ p =k}}^{n - k} \frac{f^{(p)}(0)\,\, f^{(n-p)}(0) \,\,\sigma_{ii}^{\frac{p-k}{2}}\,\, \sigma_{jj}^{\frac{n-p-k}{2}}}{k!\,\, 2^{n/2 - k} \,\,(\frac{p-k}{2})! \,\,(\frac{n -p-k}{2})!}.
\end{equation} 
Let $n = 2m$, $p = 2r+1$, and $k = 2s+1$. Then the sum in~\eqref{eq:coefficient_sigmaijk} becomes 
\begin{equation*}
\frac{1}{k! \,\, 2^{-k}} \sum_{m \geq k} \frac{1}{2^{m}} \sum_{ r = s}^{m - s -1} \frac{f^{(2 r +
    1)}(0)\,\, f^{(2 m - 2r - 1)}\sigma_{ii}^{r-s}\,\, \sigma_{jj}^{m - r -s -1}}
{\,\, (r - s)! \,\,( m - r - s -1)!}.
\end{equation*}
Again change variables. Let $r- s = t,$ and we have
\begin{equation*}
\frac{1}{k! \,\, 2^{ -k}} \sum_{m \geq k} \frac{1}{2^{m}} \sum_{ t = 0}^{m - 2s -1} \frac{f^{(2 (s +
    t) + 1)}(0)\,\, f^{(2 m - 2(s + t) - 1)}(0) \sigma_{ii}^{t} \,\,\sigma_{jj}^{m - t - 2s
    -1}}{\,\, t! \,\,( m -t - 2 s -1)!}.
\end{equation*}
 Recall our functions
\[ F_{k}(x)  =  \sum_{u = 0} \frac{f^{(2u + k)}(0)}{ u!} x^{u},\]
and $G_{kij}(x) = F_{k}(\sigma_{ii}x)F_{k}(\sigma_{jj}x).$
Then the coefficient of $\sigma_{ij}^k$ becomes
\[ \frac{1}{k! \,\, 2^{ -k}} \sum_{m \geq k} \frac{1}{2^{m}}\frac{G_{kij}^{(m - k)}(0)}{ ( m -k)!}. \]
Finally, by using a Taylor series expansion of $G_{kij}$ around $0$ we can write this coefficient as
\[ \frac{1}{k!} \sum_{m = 0} \frac{1}{2^{m}}\frac{G_{kij}^{(m)}(0)}{  m!} = \frac{1}{k!} G_{kij}(1/2),\] and the result follows.
\endproof

While the answer may look complex, it is easy to compute in many cases. The computation is also
often simplified by noting that, for $k=2s + 1$, $F_{k}$ is the $s$th derivative of $F_{1}.$ Here
are some examples: 
\begin{enumerate}
\item 
Let $f(x) = \sin(x).$ Then $f^{(2u +1)}(0) = (-1)^{u},$ and $F_{k}(x) =  (-1)^{\frac{k-1}{2}}e^{-x}$. Thus $G_{kij}(x) = e^{- (\sigma_{ii} + \sigma_{jj})x}$. Summing over the odd indices $k$ we have that $\sigma_{ij}$ is transformed to 
$$\tau_{ij} = e^{-\frac{\sigma_{ii}+\sigma_{jj}}{2}}\,\sinh \sigma_{ij}.$$ 
%Notice that this is the same result for the diagonal elements. Thus in this example, the new covariance matrix is found by simply transforming each entry by the function
%$\frac{1}{e}\sinh(x).$
\item 
Let us also verify the computation done earlier for the function $f(x) = x^{3}.$
For this case, $F_{1}(x) = 6x$, $F_{3}(x) = 6,$ and $F_{k} = 0$ if $k > 3$.
Thus we have $G_{1ij}(x) = 36(\sigma_{ii}\sigma_{jj}x^{2})$, and $G_{3ij}(x) = 36$. This yields 
a final answer of 
$$\tau_{ij} = 9 \sigma_{ii}\sigma_{ij}\sigma_{jj} + 6\sigma_{ij}^{3}.$$
\item 
Let $f(x) = \sinh(x).$ Then $f^{(2u +1)}(0) = 1,$ and $F_{k}(x) =  e^{x}.$ Thus $G_{kij}(x) = e^{(\sigma_{ii}+\sigma_{jj})x} ,$ and 
we have that $\sigma_{ij}$ is transformed to 
$$\tau_{ij} = e^{\frac{\sigma_{ii}+\sigma_{jj}}{2}}\,\sinh \sigma_{ij}.$$  
\item
Let $f(x) = \frac{x^{2l+1}}{(2l+1)!}$ Then $F_{1}(x) = \frac{x^{l}}{l!}$ and $F_{2s+1}(x) = \frac{x^{l - s}}{(l-s)!}.$ Thus we have
        $$\tau_{ij} = \sum_{s = 0}^{l} \frac{(\sigma_{ii}\sigma_{jj})^{l-s}}{4^{(l-s)}((l-s)!)^2(2s+1)!}\sigma_{ij}^{2s+1}.$$
\end{enumerate}

In Appendix~\ref{sec:app}, we also provide related and somewhat simpler computations that may be
helpful in some circumstances. In particular, we compute the coefficient of the linear term in
expansion~\eqref{eq:multivariate_moment}, i.e., the coefficient corresponding to $\sigma_{ij}^1$.

To conclude this section, we present an important (and well-known) consequence of
Theorem~\ref{thm:main} about the marginal independence properties of random variables after diagonal
transformations. For a pair of mean-zero Gaussian variables $(X_i,X_j)$ that are uncorrelated (and
hence also marginally independent), we have $\sigma_{ij} = 0$. Thus,
by~\eqref{eq:univariate_first_moment}
and~\eqref{eq:transformed_moment} we have that $\tau_{ij} = (\Sigma_{\pi})_{ij} = 0$ and the variable pair
$(Y_i,Y_j)$ is also uncorrelated\footnote{In fact, diagonal transformations also preserve the
marginal independence properties of the random variables.}. 
%\todo{There is an interesting point here! For diagonal transformations, the variables are actually marginally independent and not just uncorrelated, but we can't conclude this from the Theorem -- sorry, I think I may have written this last time. I added a footnote to clarify this.}. 
Therefore, diagonal transformations exactly preserve the sparsity of the covariance matrix for $X$,
i.e., the zero elements in $\Sigma_\rho$, in the covariance matrix of $Y$.

\section{Properties of the inverse covariance}\label{sec:inv} Our goal is to not only say something
about the covariance matrix for the transformed variables, but also about what happens to the
entries in the precision matrix. We may be faced with the following situation. We have a precision
matrix from a multivariate normal variable with some zeros to start. After applying a diagonal transformation to those
variables, we would like to know what (approximate) sparsity can be recaptured in the inverse
covariance matrix of the non-Gaussian variables. We probably cannot hope to do this in general, but
we can say something specific about some particular cases that often do occur in applications.
First, we begin with a technical lemma about matrix inverses.

\begin{lemma} %\todo{This lemma can be potentially moved to the technical appendix.}
Let $A = I + B$ where the operator norm of $B$, $\|B\|$, is at most $\delta < 1$. Then $A^{-1} = I
    - B + E$ where the norm of $E$ is at most $\frac{\delta^{2}}{1 - \delta}$. 
    %REM: check norm
\end{lemma}
\begin{proof}
From the Neumann series expansion for the inverse of $A$, we have
\begin{equation*}
    A^{-1} = \sum_{k = 0}^{\infty}(-B)^{k} = I  - B + \sum_{k = 2}^{\infty}(-B)^{k}.
\end{equation*} % We have that 
% $$ A^{-1 } = I  - B + \sum_{k = 2}^{\infty}(-B)^{k}.$$ 
For $\|B\| < \delta$, the norm of the last term is at most $\frac{\delta^{2}}{1 - \delta}$ for a converging geometric series. %and so
 %$$A^{-1}_{i,j} = \langle e_{i}, A^{-1}e_{j} \rangle ,$$ which implies
% $$ |A^{-1}_{i,j} - \delta_{i,j} + b_{i,j}|  =  |\langle e_{i}, Ee_{j} \rangle | \leq ||E||.$$
 \end{proof}
% Each entry must be of the form
%$$ A_{i,j} = \delta_{ij} - b_{i,j} + e_{i,j}$$
% where $|e_{i,j}| < \delta^{2}/ 1 - \delta.$ This follows first from
We note that entry $(i,j)$ of the matrix $A^{-1}$ must be of the form $\delta_{ij} - B_{ij} +
E_{ij}$ where $|E_{ij}| \leq \|E\|$ and thus 
$|E_{ij}| \leq \frac{\delta^2}{1 - \delta}$. Lastly,
we note that the inverse matrix can also be written as $A^{-1} = I - B^{'}$ where the operator norm of
$B^{'} \coloneqq B - E$ is at most $\epsilon \coloneqq \frac{\delta}{1 - \delta}.$
 
Now suppose that we have a precision matrix of the form $I + B$ where $B$ has diagonal elements
equal to $0$ and $\|B\| < \delta.$ By the lemma above, whenever an off-diagonal entry of the
precision matrix is zero, the corresponding entry of the covariance will be of order (at most)
$\epsilon^{2}$. If the entry in the precision is not zero, then the entry is of the from
$\delta_{ij} - B_{ij}$ plus something again of order $\epsilon^{2}$.  
%Next we
%attempt to keep track of what happens to those entries of the covariance under the transformation
%given that our function $f$ is odd. \todo{It would be great to add a short sentence here to outline what is covered by Lemmas 3.2-3.4}
%\todo{Should this be $\epsilon$? The entry is given by $B'$ whose entries are all bounded by
%$\epsilon$, I believe. REM: No, because you already have the B_{i,j} term, and then \epsilon^2 is
%what is left over.}
The purpose of the next three lemmas is to get good estimates for the entries of the covariance
matrix after the transformation, when the function $f$ is odd. First,
Lemma~\ref{lem:transf_cov_oddf} estimates the off-diagonal $(i,j)$ entries, but the
scaling relative to $\sigma_{ij}$ still depends on $i$ and $j$. Second,
Lemma~\ref{lem:cov-diag} estimates the diagonal $(i,i)$ terms, where the scaling factor for the leading term is
independent of $i$. Third, Lemma~\ref{lem:cov-offd} modifies the estimates of off-diagonal terms, so
that the scaling factor of the leading terms no longer depends on $i$ and
$j$.
%\todo{I wouldn't say this is a refinement, because it is a $(i,j)$ independent bound, while 3.2
%should be finer. How about: 'Lemma 3.4 provides another estimate of the off-diagonal terms where the
%scaling factor no longer depend on $i$ and $j$'?}

\begin{lemma} \label{lem:transf_cov_oddf}
Suppose that $f$ is an odd function with derivatives at zero bounded by $N$, and the precision matrix $\Gamma_\rho$ is of
    the form $I + B$ where $B$ has operator norm at most $\delta < 1$. Then for $i \neq j$
\begin{equation}
    \tau_{ij} = G_{1ij}(1/2)\sigma_{ij} + O(\epsilon^{3}),
\end{equation}
    where $G_{1ij}(1/2)$ is a constant that depends only on $f$ that is given in Theorem \ref{thm:main}, and
    $\epsilon = \frac{\delta}{1-\delta}$.
\end{lemma}
\proof
From Theorem~\ref{thm:main} we have that the transformation of $\sigma_{ij}$ is given by 
 \[ \tau_{ij} = \sum_{\text{odd} \,\,k} G_{kij}(1/2) \frac{\sigma_{ij}^{k}}{k!}.\] If $N$ is a bound
 on the derivatives of $f$ at zero, then 
 it follows that \[|G_{kij}(1/2)| \leq N^2 e^{(\sigma_{ii} + \sigma_{jj})/2} \leq N^2e^{ 1 +
 \epsilon}, \]
where the last inequality follows from the diagonal entries of $\Sigma_\rho = I - B'$ being bounded by $1 + \epsilon$. 
%for $\Gamma_\rho$ of the form $I + B$.}
% \todo{I may have missed something, but I think the bound should have $M^2$ (one from
% each $F_k$). I was also wondering how the last inequality arises?} REM: M^2 correct.
Next, the difference $|\tau_{ij}  - G_{1ij}(1/2)\sigma_{ij}|$ for any $i \neq j$ is bounded as:
 \begin{align*} 
     |\tau_{ij}  - G_{1ij}(1/2)\sigma_{ij}| &\leq \sum_{\substack{\text{odd}\,\,k\\ k \geq 3}}
     \left|G_{kij}(1/2) \frac{\sigma_{ij}^k}{k!}\right| \\
     %\leq  N^2 e^{1+ \epsilon} \sum_{\substack{\text{odd}\,\,k\\ k \geq 3}} \frac{\sigma_{ij}^k}{k!}\\
     &\leq N^2 e^{1+ \epsilon} \epsilon^3 \sum_{\substack{\text{odd}\,\,k\\ k \geq 3}} \frac{|\sigma_{ij}|^{k-3}}{k!}\\
     %&= N^2 e^{1+ \epsilon} \epsilon^3 \sum_{l\geq0} \frac{|\sigma_{ij}|^{2l}}{(2l+3)!}\\
     &\leq N^2 e^{1+ \epsilon} \frac{\epsilon^3}{3!} \sum_{l\geq0} \frac{|\sigma_{ij}|^{2l}}{(2l)!}
     %&\leq N^2 e^{1+ \epsilon} \epsilon^3\left(\frac{1}{6} + \frac{|\sigma_{ij}|^2}{5!} +
     %\frac{|\sigma_{ij}|^4}{7!} + \dots \right)\\
     %&\leq N^2 e^{1+ \epsilon}\epsilon^3\frac{1}{6}\left( 1 + \frac{\epsilon^2}{2!} + \frac{\epsilon^4}{4!} + \dots \right)
     \leq N^2 e^{1+ \epsilon} \frac{\epsilon^3}{6} \cosh \epsilon,
 \end{align*}
using that off-diagonal entries of $\Sigma_\rho$ are bounded by $\epsilon$. Given that $\epsilon <
1,$ we have a bound for the difference of at most $2N^2\epsilon^3$, although for a given function
and smaller $\epsilon$ this bound can be improved.
\endproof
%\todo{From my calculations, the error is bounded by $|\tau_{ij} - G_{1ij}(1/2)\sigma_{ij}| \leq
%\sinh(\epsilon)N^2e^{1 + \epsilon}$, but then I'm not sure how to get the line above. Can we add a
%little more detail here. Is the Taylor series of $sinh$ around 0 and is it a bound? I'm assuming
%the series is truncated?} REM: Will review with RSB
%\todo{I may have missed something, but I think the expression above can be bounded by $2N^2$. Is the
%$5$ necessary?} REM: Should be 3, will review with RSB
%\todo{One other high-level thought that I have is that the bound above on $G_{kij}(1/2)$ also
%applies to $k=1$. Does this mean that the first term $G_{1ij}\sigma_{ij}$ is of a similar order?}
%REM: I think you are saying the same thing as what is used to bound the error, will review with RSB

\begin{lemma}\label{lem:cov-diag}
Suppose that $f$ is an odd function with bounded derivatives at zero, and the precision matrix is of
    the form $I + B$ where $B$ has norm at most $\delta <1,$  $\epsilon$ is given as above, and
    $B_{ii} = 0.$ Let 
\[ \kappa = \sum_{\text{odd }\,k} \frac{F_{k}^{2}(1/2)}{k!}.\] Then 
$$\tau_{ii} = \kappa \sigma_{ii} + O(\epsilon^{2}).$$
\end{lemma} 
\proof
The proof of this is similar to the previous lemma. Notice that the transformation of $\sigma_{ii}$
is given by
$$ \tau_{ii} = \sum_{\text{odd } k} G_{kii}(1/2) \frac{\sigma_{ii}^{k}}{k!}.$$ We think of this as a function of
the variable $\sigma_{ii}$. When $\sigma_{ii}$ is one, then this evaluates to $\kappa$. 
%\todo{Sorry, I missed this: what is $D$?}.
%REM: Changing K to \kappa and L to \lambda
Recall that $\sigma_{ii} = 1 + O(\epsilon^{2})$ and thus using Taylor's theorem (since all
derivatives of this function are bounded in a neighborhood of one), the result follows.
\endproof

\begin{lemma}\label{lem:cov-offd}
Suppose that $f$ is an odd function with bounded derivatives at zero, the precision matrix is of the form $I
    + B$ where $B$ has norm at most $\delta <1$,  $\epsilon$ is given as above, and $B_{ii} = 0.$
    Then for $i \neq j$
\begin{equation*}
\tau_{ij} = \lambda\,\sigma_{ij} + O(\epsilon^{3}).
\end{equation*}
where $\lambda = F_{1}^{2}(1/2).$
\end{lemma}
\begin{proof}
To see this, notice that by Taylor's remainder theorem,
\[ \left|F_1(x/2) - F_1(1/2)\right| \leq \frac{\alpha}{2} |x-1| \] for some
constant $\alpha$ in a small neighborhood around $x=1$. Now let $x = \sigma_{ii} = 1 +
O(\epsilon^2)$, which relies on $B_{ii} = 0$ (otherwise we would have $O(\epsilon)$). Thus
\[ \left|F_1(x/2) - F_1(1/2)\right| \leq \frac{\alpha}{2} O(\epsilon^2) .\]

We now replace $G_{1ij}(1/2)$ in Lemma~\ref{lem:transf_cov_oddf} with the approximation
\begin{align*}G_{1ij}(1/2) &= F_1(\sigma_{ii} /2) F_1(\sigma_{jj}/2)\\
    &=\left(F_1(1/2) + O(\epsilon^2) \right)\left(F_1(1/2) + O(\epsilon^2) \right)\\
    &= F_1^2(1/2) + O(\epsilon^2).
\end{align*}
The difference above is $O(\epsilon^{2})$, and since $\sigma_{ij}$ is at most $\epsilon$, the result
follows.
\end{proof}
%\todo{One quick question, does the result above follow from Lemma 3.2 by noticing that
%$G_{1ij}(1/2) = F_1^2(1/2)$. I believe this actually holds if $B_ii = 0$ and if $B$ has diagonal
%entries. In other words, I don't see directly where we used the property $B_ii = 0$. If we don't,
%then the result holds more generally and I think we can include it in Lemma 3.2.}REM: Great
%question, proof was too skeletal, have revised.

To summarize, in the case of odd functions with the precision matrix $\Sigma_\rho^{-1}$ given above,
we have that the transformed covariance matrix $\Sigma_\pi$ has the form
\begin{equation} \label{eq:transformed_covmatrix_oddf}
\tau = \kappa I - \lambda B + E^{'}
\end{equation}
 where $\kappa$ and $\lambda$ are given in
 Lemmas~\ref{lem:cov-diag} and~\ref{lem:cov-offd}, respectively, $B$ is as before, and $E^{'}$ is
 the error. Let us now estimate the norm of the error term $E^{'}$. 
%\todo{Can we also add operator norm to the statement of the result, just for consistency with earlier matrix norms.}
\begin{theorem} \label{thm:error_term}
Let the precision matrix be a $d\times d$ matrix, and suppose $d \epsilon$ is bounded. Then the
    operator norm of $E^{'}$ is at most $O(\epsilon^{2})$. 
\end{theorem}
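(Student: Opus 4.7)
The plan is to convert the entry-wise bounds from Lemmas~\ref{lem:cov-diag} and~\ref{lem:cov-offd} into an operator-norm bound by first expressing $\tau - (\kappa I - \lambda B)$ as a controlled matrix (essentially $\lambda$ times the Neumann remainder) plus a small entry-wise residual, and then using the hypothesis that $d\epsilon$ is bounded to turn the residual's entry-wise smallness into an operator-norm bound.

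First I would apply the earlier technical lemma to write
\[
\Sigma_\rho = (I + B)^{-1} = I - B + E_N, \qquad \|E_N\|_{\mathrm{op}} \leq \tfrac{\delta^2}{1-\delta} = \delta\epsilon \leq \epsilon^2.
\]
In particular each entry of $E_N$ is $O(\epsilon^2)$, and since $B_{ii}=0$ this gives $\sigma_{ii} = 1 + (E_N)_{ii}$ and $\sigma_{ij} = -B_{ij} + (E_N)_{ij}$ for $i \ne j$. Substituting these into Lemmas~\ref{lem:cov-diag} and~\ref{lem:cov-offd} gives the entry-wise identities
\[
\tau_{ii} = \kappa + \kappa(E_N)_{ii} + O(\epsilon^2), \qquad \tau_{ij} = -\lambda B_{ij} + \lambda (E_N)_{ij} + O(\epsilon^3) \ \ (i\neq j).
\]
Subtracting $\kappa I - \lambda B$ yields the decomposition $E' = \lambda E_N + R$, where $R$ has diagonal entries $R_{ii} = (\kappa - \lambda)(E_N)_{ii} + O(\epsilon^2) = O(\epsilon^2)$ and off-diagonal entries $R_{ij} = O(\epsilon^3)$.

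Next I would bound each piece separately. The term $\|\lambda E_N\|_{\mathrm{op}} = O(\epsilon^2)$ directly from the Neumann estimate. For $R$, I split it further into its diagonal part $R_d$ and off-diagonal part $R_o$. The diagonal part satisfies $\|R_d\|_{\mathrm{op}} = \max_i |R_{ii}| = O(\epsilon^2)$ trivially. For $R_o$, I use $\|R_o\|_{\mathrm{op}} \leq \sqrt{\|R_o\|_1 \|R_o\|_\infty}$, and since every row/column sum is at most $d \cdot \max_{i \ne j} |R_{ij}| = O(d\epsilon^3)$, one obtains $\|R_o\|_{\mathrm{op}} \leq O(d\epsilon^3)$. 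Invoking the hypothesis $d\epsilon = O(1)$ converts this to $O(\epsilon^2)$. The triangle inequality then gives $\|E'\|_{\mathrm{op}} = O(\epsilon^2)$.

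The main obstacle is precisely the last step: entry-wise bounds of order $\epsilon^3$ do not automatically yield operator-norm bounds of order $\epsilon^2$, and without the assumption $d\epsilon = O(1)$ the best one could say about $R_o$ is $O(d\epsilon^3)$, which can exceed $\epsilon^2$. The point of the Neumann-series rewrite is to peel off the bulk of the $\epsilon^2$-sized contribution as a single matrix ($\lambda E_N$) whose operator norm is already controlled, leaving only an $\epsilon^3$-per-entry residual where the dimensional factor $d$ can be absorbed cleanly under the stated assumption.
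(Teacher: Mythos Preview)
Your proof is correct, and the overall strategy---separate diagonal from off-diagonal contributions, control the diagonal directly, and use the hypothesis $d\epsilon=O(1)$ to absorb the dimensional factor on the off-diagonal residual---parallels the paper's. The genuine difference is your extraction of $\lambda E_N$ before splitting. The paper simply writes $E'=E_1+E_2$ with $E_1$ diagonal and $E_2$ off-diagonal, asserts that the entries of $E_2$ are $O(\epsilon^3)$, and then bounds $\|E_2\|_{\mathrm{op}}\le\|E_2\|_{\mathrm{HS}}=O(d\epsilon^3)$. But as your argument makes explicit, the off-diagonal entries of $E'$ itself are $\lambda(E_N)_{ij}+O(\epsilon^3)=O(\epsilon^2)$, not $O(\epsilon^3)$; the paper's $O(\epsilon^3)$ assertion really only holds after the $\lambda E_N$ contribution has been separated out. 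Your decomposition $E'=\lambda E_N+R$ handles this cleanly: $\|\lambda E_N\|_{\mathrm{op}}=O(\epsilon^2)$ comes directly from the Neumann estimate with no dimensional factor, and the residual $R$ genuinely has $O(\epsilon^3)$ off-diagonal entries, so the $d\epsilon=O(1)$ argument applies to it. Your use of $\|R_o\|_{\mathrm{op}}\le\sqrt{\|R_o\|_1\|R_o\|_\infty}$ in place of the Hilbert--Schmidt bound is a cosmetic variation; both yield $O(d\epsilon^3)$ here.
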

\begin{proof}
    Let us first split $E^{'}$ into a diagonal matrix $E_1$, and a matrix with only off-diagonal
    non-zero elements $E_2$, i.e., $E^{'} = E_1 + E_2$.
 The entries of $E_1$ are $O(\epsilon^{2})$ and hence this matrix has operator norm at most
    $O(\epsilon^{2})$. The entries of $E_2$ are
    $O(\epsilon^{3})$; thus this matrix has Hilbert-Schmidt
    norm at most $O(d\epsilon^{3})$.  Finally,
    \begin{align*} \| E^{'} \| \leq \|E_1\| + \|E_2\| \leq \|E_1 \| + \|E_2\|_{\text{HS}} &= O(\epsilon^2) + O(d
    \epsilon^3)\\
        &=O(\epsilon^2),
    \end{align*}
    where, in the final line we have used that $d\epsilon$ is bounded.
 \end{proof}
 
%\todo{If the diagonal elements have error $O(\epsilon^2)$, then I think we can sat that the sum of
 %the squared entries of the diagonal is at most $d\epsilon^2$ (similar type of statement for
 %off-diagonals with $\epsilon^3$). If $d$ is unrelated to $\epsilon$, then I think we can just saw
 %the error is at most $O(\epsilon^2)$, without requiring that $d\epsilon$ is bounded, which seems
 %unnatural to me (because $d$ is fixed and $\epsilon < 1$).} REM: No, H-S computes to d epsilon^3,
 %will review with RSB. This result is more like random matrix results, where it is typical to scale
 %with matrix size. We can remove but I kinda like it as is?
Note that these results can be made much more precise for specific functions $f$ and for specific
matrices of fixed size when it is possible to keep track of the constants in the $O$ estimates.  On
the other hand, the result in Theorem~\ref{thm:error_term} allows for arbitrarily large matrices.
The bounded  condition on $d\epsilon$ (as opposed to just fixing the dimension $d$) is also
intuitive in the sense that edge weights 
%\todo{One question here: The edge weights decrease. Should we say that we expect the errors to be proportional to these weights? Just to make sure the reader knows where we are going with this.} 
often decrease as graphs grow larger in dimension (think, for example, of a star graph). Future work
will explore the scaling dependence of the error term in Theorem~\ref{thm:error_term} on other
graph properties such as the maximum node degree.
 
Our final step is to compute the inverse of the covariance matrix
in~\eqref{eq:transformed_covmatrix_oddf}, which is given by %invert 
%\begin{equation*}
%\kappa I + \lambda B + E^{'},
%\end{equation*}
%which is
\begin{align} \tau^{-1} &=\kappa^{-1} ( I - \frac{\lambda}{\kappa}(B-  E^{'}) )^{-1} \nonumber \\
&=\kappa^{-1} ( I + \frac{\lambda}{\kappa}B + E^{''}) = \frac{1}{\kappa}I + \frac{\lambda}{\kappa^{2}}B + \frac{1}{\kappa}E^{''} \nonumber\end{align}
for some error term $E''$. 
%In the above $E^{'}, E^{''}$ are just the scaled error terms. 
Thus the off-diagonal terms of the transformed precision have much the same behavior of the original precision matrix. If
$B_{ij} = 0$, the off-diagonal entries are of order $\epsilon^{2}$. If they are not zero, then the first order
term scales with $B_{ij}$.

This will be illustrated by examples in the next section.

\section{Applications to specific graphs}\label{sec:ex}
\subsection{Chain graph}
We begin with an example of a circulant matrix that corresponds to the starting precision of a chain
graph. Consider $$\Gamma_\rho = \left(
\begin{array}{cccccccc}
 1 & \frac{1}{22} & 0 & 0 & 0 & 0 & 0 & \frac{1}{22} \\
 \frac{1}{22} & 1 & \frac{1}{22} & 0 & 0 & 0 & 0 & 0 \\
 0 & \frac{1}{22} & 1 & \frac{1}{22} & 0 & 0 & 0 & 0 \\
 0 & 0 & \frac{1}{22} & 1 & \frac{1}{22} & 0 & 0 & 0 \\
 0 & 0 & 0 & \frac{1}{22} & 1 & \frac{1}{22} & 0 & 0 \\
 0 & 0 & 0 & 0 & \frac{1}{22} & 1 & \frac{1}{22} & 0 \\
 0 & 0 & 0 & 0 & 0 & \frac{1}{22} & 1 & \frac{1}{22} \\
 \frac{1}{22} & 0 & 0 & 0 & 0 & 0 & \frac{1}{22} & 1 \\
\end{array}
\right).$$
For this $\Gamma_\rho$, $\delta = 1/11.$ Its inverse is given by (rounded to $10^{-4}$ places)
$$\Gamma_\rho^{-1} = \left(
\begin{array}{cccccccc}
 1.0042 & -0.0457 & 0.0021 & -0.0001 & 0. & -0.0001 & 0.0021 & -0.0457 \\
 -0.0457 & 1.0042 & -0.0457 & 0.0021 & -0.0001 & 0. & -0.0001 & 0.0021 \\
 0.0021 & -0.0457 & 1.0042 & -0.0457 & 0.0021 & -0.0001 & 0. & -0.0001 \\
 -0.0001 & 0.0021 & -0.0457 & 1.0042 & -0.0457 & 0.0021 & -0.0001 & 0. \\
 0. & -0.0001 & 0.0021 & -0.0457 & 1.0042 & -0.0457 & 0.0021 & -0.0001 \\
 -0.0001 & 0. & -0.0001 & 0.0021 & -0.0457 & 1.0042 & -0.0457 & 0.0021 \\
 0.0021 & -0.0001 & 0. & -0.0001 & 0.0021 & -0.0457 & 1.0042 & -0.0457 \\
 -0.0457 & 0.0021 & -0.0001 & 0. & -0.0001 & 0.0021 & -0.0457 & 1.0042 \\
\end{array}
\right)$$
and now $\epsilon = \delta/(1 - \delta) = 1/10$.
%The other is to use our main theorem. This result is
We now demonstrate the effect of applying the diagonal transformation $f(x) = \sin(x)$ to a multivariate normal vector $X$ with the covariance above. Using Theorem~\ref{thm:main}, the transformed covariance is given by
$$
\Sigma_\pi = \left(
\begin{array}{cccccccc}
 0.4329 & -0.0168 & 0.0008 & 0. & 0. & 0. & 0.0008 & -0.0168 \\
 -0.0168 & 0.4329 & -0.0168 & 0.0008 & 0. & 0. & 0. & 0.0008 \\
 0.0008 & -0.0168 & 0.4329 & -0.0168 & 0.0008 & 0. & 0. & 0. \\
 0. & 0.0008 & -0.0168 & 0.4329 & -0.0168 & 0.0008 & 0. & 0. \\
 0. & 0. & 0.0008 & -0.0168 & 0.4329 & -0.0168 & 0.0008 & 0. \\
 0. & 0. & 0. & 0.0008 & -0.0168 & 0.4329 & -0.0168 & 0.0008 \\
 0.0008 & 0. & 0. & 0. & 0.0008 & -0.0168 & 0.4329 & -0.0168 \\
 -0.0168 & 0.0008 & 0. & 0. & 0. & 0.0008 & -0.0168 & 0.4329 \\
\end{array}
\right).
$$
We note that as predicted, this matrix is circulant
and preserves the sparsity in the covariance matrix of $X$. %Notice the second result is more
%consistent since the matrix should still be circulant. 

To verify the computation of $\Sigma_\pi$, we also estimate the covariance using samples. To do so,
we generated 100,000 samples from the distribution with the above covariance and then applied the
function $y = \sin x$ to the data.  The resulting empirical covariance (which does not preserve the
circulant property) was
$$
\left(
\begin{array}{cccccccc}
 0.4349 & -0.0167 & -0.0018 & 0. & -0.0005 & 0.0007 & 0.0011 & -0.016 \\
 -0.0167 & 0.4314 & -0.0184 & 0.0003 & 0.0009 & -0.0003 & 0.0024 & -0.0001 \\
 -0.0018 & -0.0184 & 0.4332 & -0.0169 & 0.0016 & 0.0003 & -0.0017 & 0.0007 \\
 0. & 0.0003 & -0.0169 & 0.4305 & -0.0165 & 0.0024 & -0.001 & -0.0028 \\
 -0.0005 & 0.0009 & 0.0016 & -0.0165 & 0.4348 & -0.0159 & 0.0008 & 0.0005 \\
 0.0007 & -0.0003 & 0.0003 & 0.0024 & -0.0159 & 0.4339 & -0.0175 & 0.0013 \\
 0.0011 & 0.0024 & -0.0017 & -0.001 & 0.0008 & -0.0175 & 0.4341 & -0.0193 \\
 -0.016 & -0.0001 & 0.0007 & -0.0028 & 0.0005 & 0.0013 & -0.0193 & 0.4331 \\
\end{array}
\right).
$$

Our theory says that the main diagonal should be $\kappa = \frac{1 - e^{-2}}{2} \sim 0.4323$
and the upper and lower off-diagonals and corners should be $-\frac{1}{e}\sinh (1/22) \sim
-0.01673$. This is  reflected in both computations of the transformed covariance above. Notice that
all other entries are less than $0.01$ in magnitude. Finally we
compute the inverse of the transformed covariance matrix $\Sigma_\pi$. This is given by
$$
\left(
\begin{array}{cccccccc}
 2.317 & 0.0895 & -0.0006 & 0. & 0. & 0. & -0.0006 & 0.0895 \\
 0.0895 & 2.317 & 0.0895 & -0.0006 & 0. & 0. & 0. & -0.0006 \\
 -0.0006 & 0.0895 & 2.317 & 0.0895 & -0.0006 & 0. & 0. & 0. \\
 0. & -0.0006 & 0.0895 & 2.317 & 0.0895 & -0.0006 & 0. & 0. \\
 0. & 0. & -0.0006 & 0.0895 & 2.317 & 0.0895 & -0.0006 & 0. \\
 0. & 0. & 0. & -0.0006 & 0.0895 & 2.317 & 0.0895 & -0.0006 \\
 -0.0006 & 0. & 0. & 0. & -0.0006 & 0.0895 & 2.317 & 0.0895 \\
 0.0895 & -0.0006 & 0. & 0. & 0. & -0.0006 & 0.0895 & 2.317 \\
\end{array}
\right).
$$
As expected from the theory in Section~\ref{sec:inv}, the diagonals entries should be $1/\kappa \sim
2.314$ and the upper and lower off-diagonal entries and corners should be 
$\lambda/\kappa^{2}(1/22) \sim 0.0895$; all other entries are less than $0.01$ as predicted.

The precision and covariance matrices before and after the transformation are shown in gray scale in
Figure~\ref{fig:chain}. Visually, the starting precision and transformed precision matrices are
almost identical.
\begin{figure}[h!t]   \begin{center}
    \begin{subfigure}{.4\textwidth}
        \includegraphics[width=\textwidth]{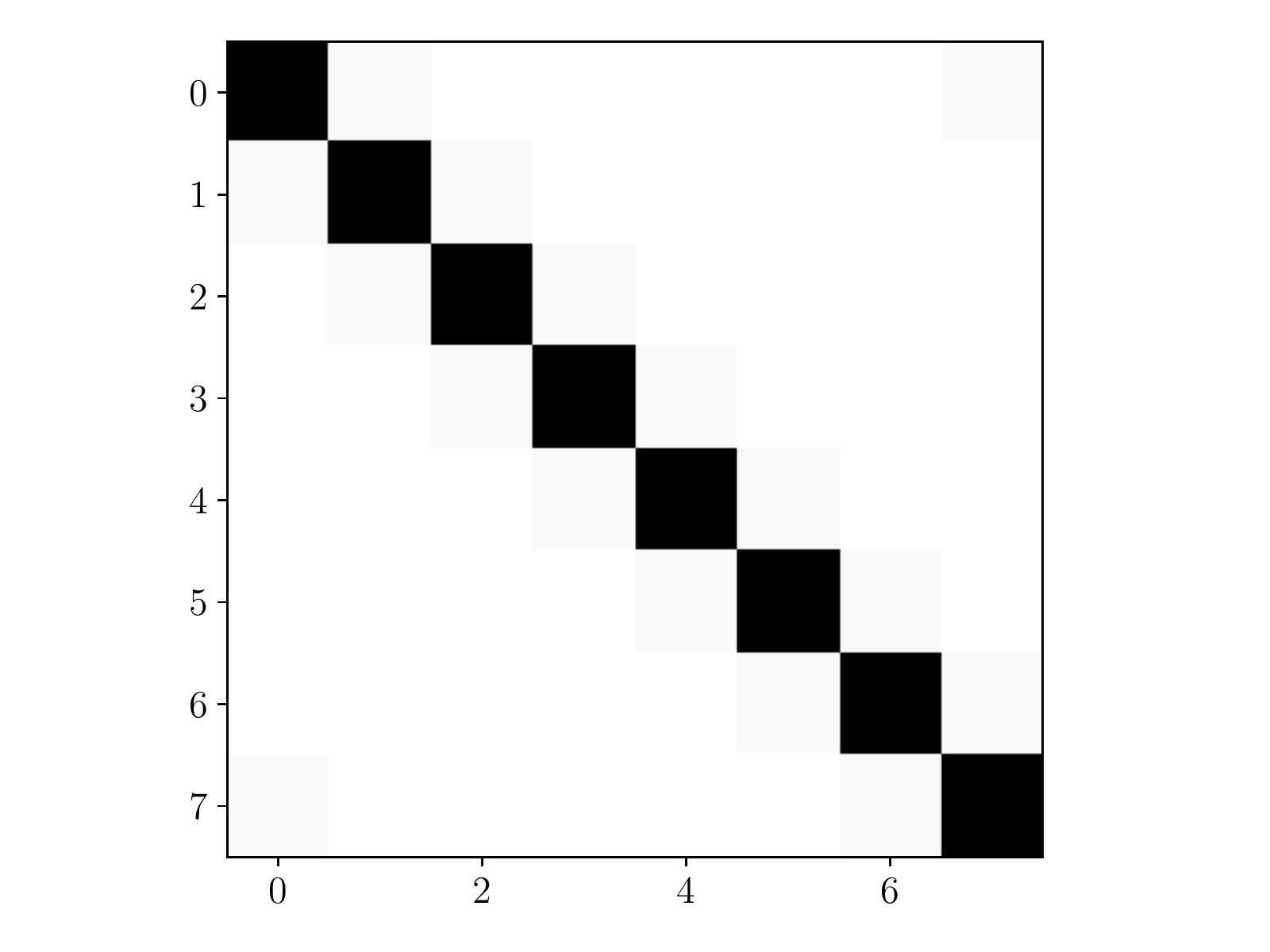} %\\
        \caption{Starting precision matrix.}% \label{fig:}}
    \end{subfigure}
    \begin{subfigure}{.4\textwidth}
        \includegraphics[width=\textwidth]{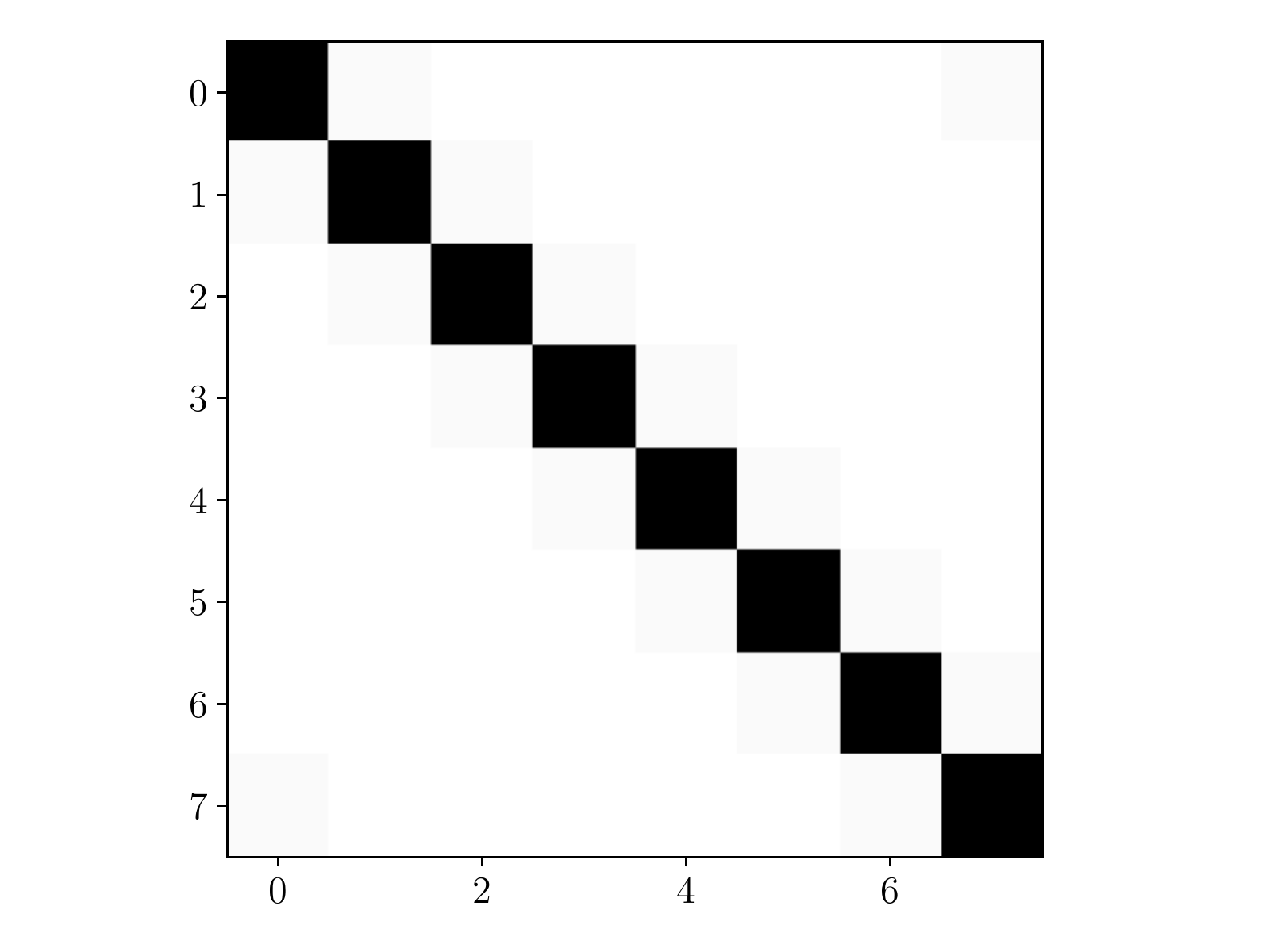} %\\
        \caption{Starting covariance matrix.}%       \label{fig:}}
    \end{subfigure}
    \begin{subfigure}{.4\textwidth}
        \includegraphics[width=\textwidth]{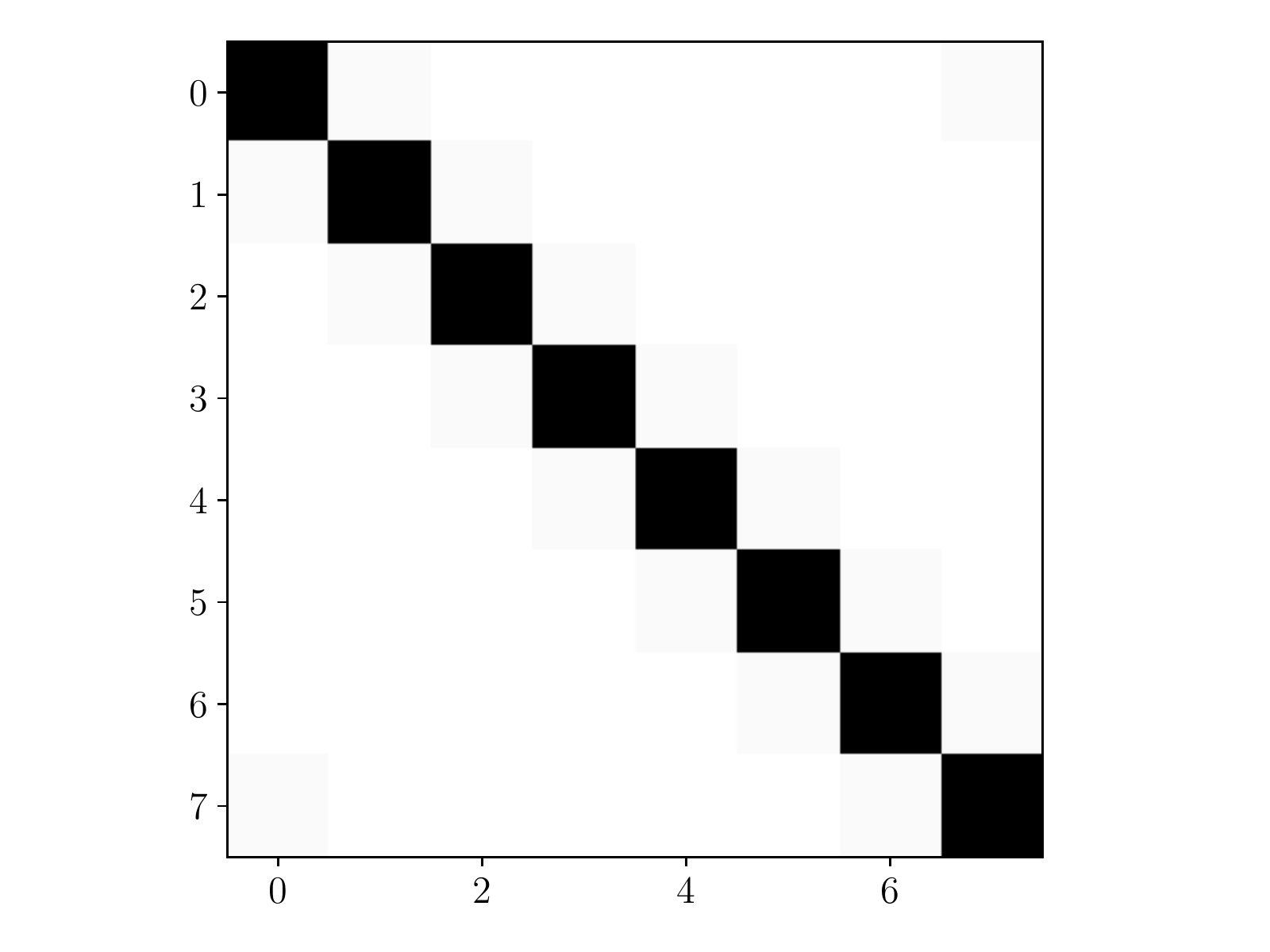} %\\
        \caption{Transformed precision matrix.}
            %\label{fig:}}
    \end{subfigure}
    \begin{subfigure}{.4\textwidth}
        \includegraphics[width=\textwidth]{rawfigs/chain-cov2.pdf} %\\
        \caption{Transformed covariance matrix.}
    \end{subfigure}\caption{Matrices for the chain graph. Gray scale represents absolute values of
    matrix entries. \label{fig:chain}}
\end{center}\end{figure}

\subsection{Star graph}
Let $\Gamma_\rho = \begin{bmatrix} 1 &b^T \\b & I \end{bmatrix}$, where $b$ is an $(d-1)$-vector with $\|b\|^2 <
    1$. Then the inverse of $\Gamma_\rho$ is
    \[\Gamma_\rho^{-1} = \frac{1}{1 - \|b\|^2}\begin{bmatrix} 1 & -b^T \\ -b & (1- \|b\|^2) I +
    bb^T\end{bmatrix} .\]
    
An example for $\Gamma_\rho$ is
$$\left(
\begin{array}{ccccc}
 1 & \frac{1}{11} & \frac{1}{11} & \frac{1}{11} & \frac{1}{11} \\
 \frac{1}{11} & 1 & 0 & 0 & 0 \\
 \frac{1}{11} & 0 & 1 & 0 & 0 \\
 \frac{1}{11} & 0 & 0 & 1 & 0 \\
 \frac{1}{11} & 0 & 0 & 0 & 1 \\
\end{array}
\right),
$$
and its inverse is
$$
\left(
\begin{array}{ccccc}
 1.0342 & -0.094 & -0.094 & -0.094 & -0.094 \\
 -0.094 & 1.0085 & 0.0085 & 0.0085 & 0.0085 \\
 -0.094 & 0.0085 & 1.0085 & 0.0085 & 0.0085 \\
 -0.094 & 0.0085 & 0.0085 & 1.0085 & 0.0085 \\
 -0.094 & 0.0085 & 0.0085 & 0.0085 & 1.0085 \\
\end{array}
\right).
$$
After applying the transformation $f(x) = \sin(x)$, and using theorem~\ref{thm:main}, we have the
transformed covariance $\Sigma_\pi$:
%for $Y$ where $Y_i = f(X_i)$ given by}
$$
\left(
\begin{array}{ccccc}
 0.4368 & -0.0339 & -0.0339 & -0.0339 & -0.0339 \\
 -0.0339 & 0.4335 & 0.0031 & 0.0031 & 0.0031 \\
 -0.0339 & 0.0031 & 0.4335 & 0.0031 & 0.0031 \\
 -0.0339 & 0.0031 & 0.0031 & 0.4335 & 0.0031 \\
 -0.0339 & 0.0031 & 0.0031 & 0.0031 & 0.4335 \\
\end{array}
\right).
$$
Once again the diagonal entries should be close to $\kappa = \frac{1}{e}\sinh(1) \sim 0.4323$. The
first non-diagonal row and column entries should have magnitude 
$\frac{1}{e}\sinh(0.094) \sim 0.0346,$ which they do. 
Finally, the precision matrix $\Gamma_\pi$ is given by 
$$
\left(
\begin{array}{ccccc}
 2.3451 & 0.1795 & 0.1795 & 0.1795 & 0.1795 \\
 0.1795 & 2.3212 & -0.0025 & -0.0025 & -0.0025 \\
 0.1795 & -0.0025 & 2.3212 & -0.0025 & -0.0025 \\
 0.1795 & -0.0025 & -0.0025 & 2.3212 & -0.0025 \\
 0.1795 & -0.0025 & -0.0025 & -0.0025 & 2.3212 \\
\end{array}
\right),
$$
which as the reader can check is exactly as predicted. All zero
entries in $\Gamma_\rho$ are less than $0.01$
in absolute value. The four matrices are shown in gray scale in Figure~\ref{fig:star}.
\begin{figure}[h!t]   \begin{center}
    \begin{subfigure}{.4\textwidth}
        \includegraphics[width=\textwidth]{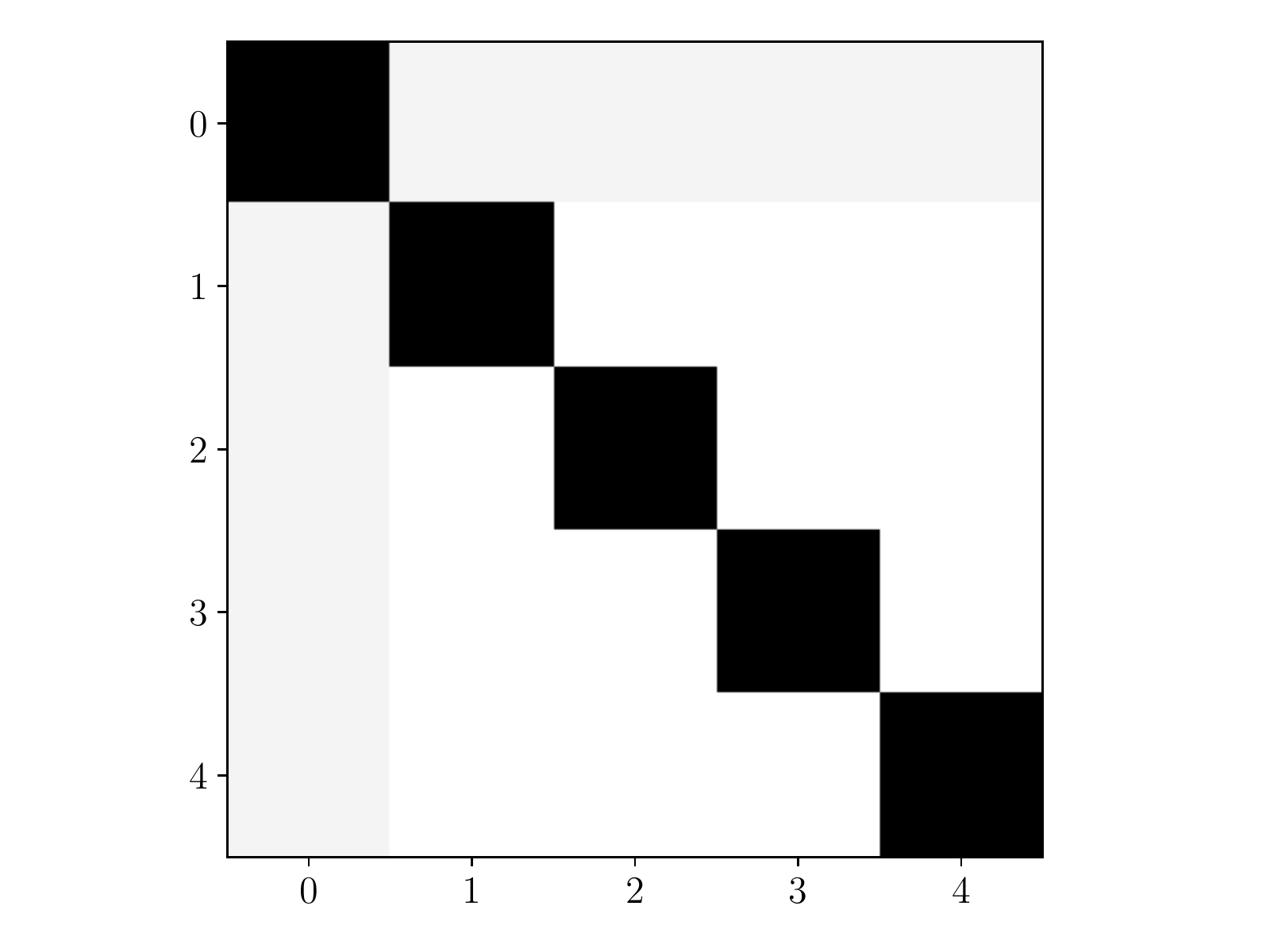} %\\
        \caption{Starting precision matrix.}% \label{fig:}}
    \end{subfigure}
    \begin{subfigure}{.4\textwidth}
        \includegraphics[width=\textwidth]{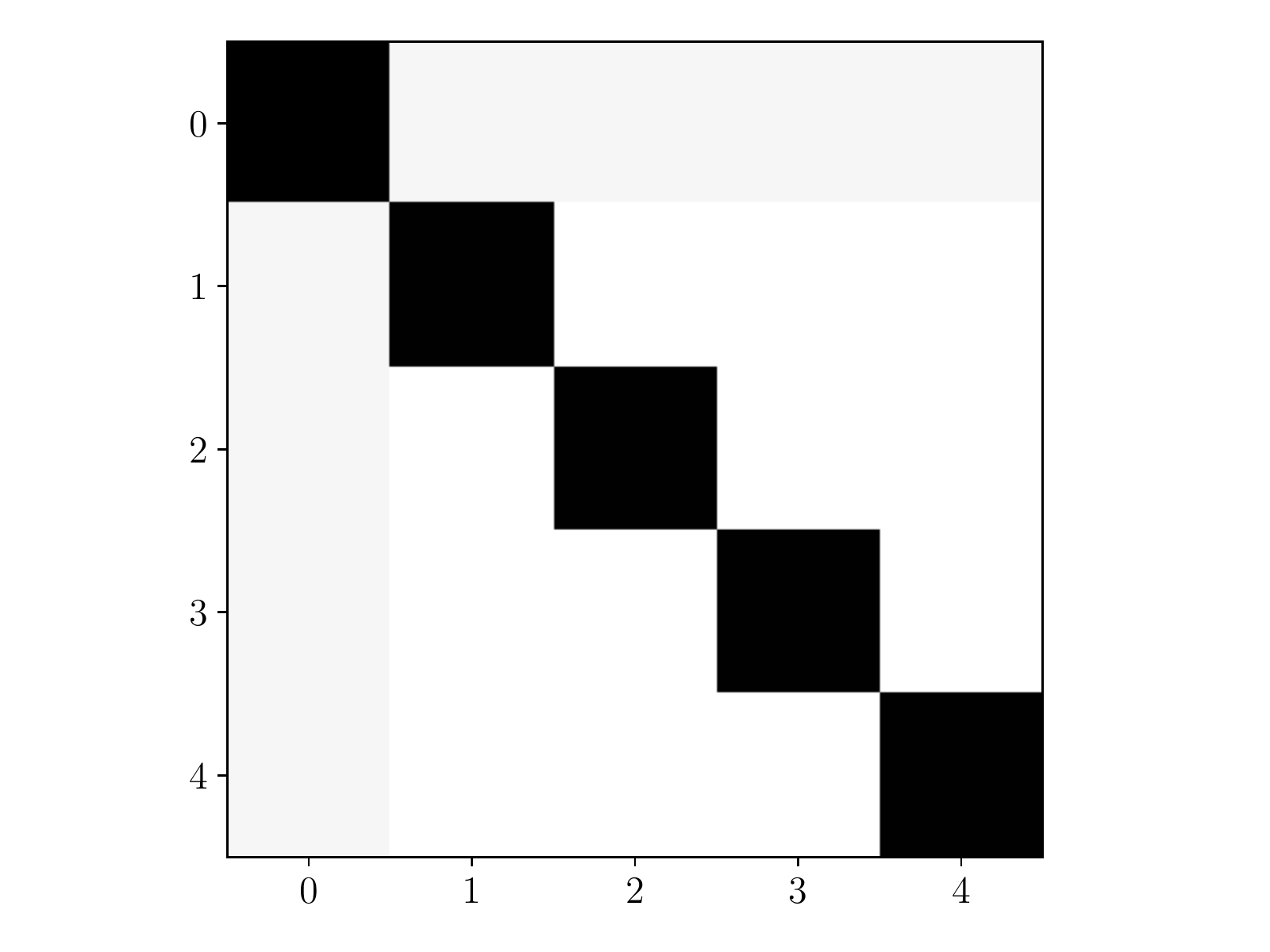} %\\
        \caption{Starting covariance matrix.}%       \label{fig:}}
    \end{subfigure}
    \begin{subfigure}{.4\textwidth}
        \includegraphics[width=\textwidth]{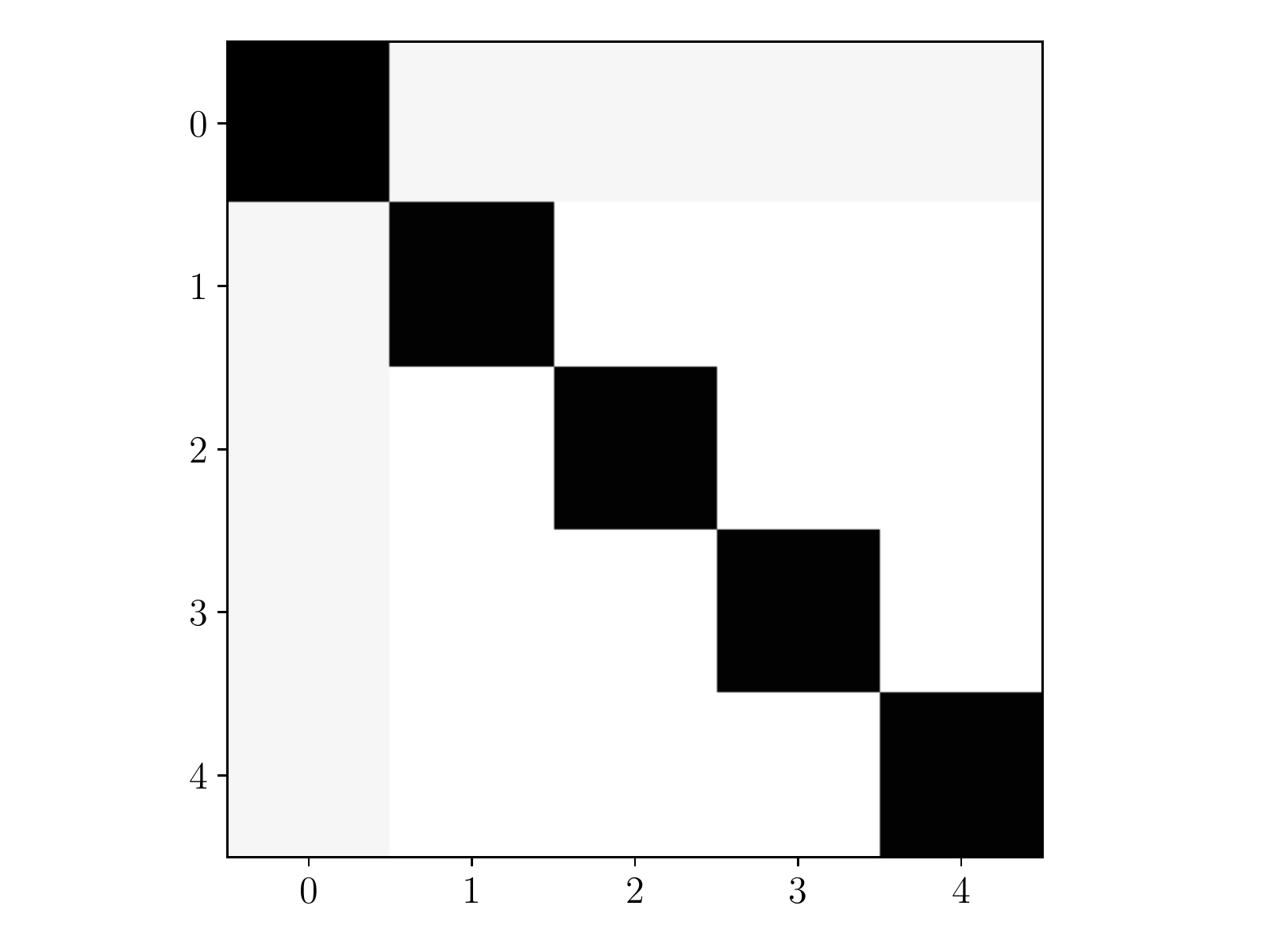} %\\
        \caption{Transformed precision matrix.}%
            %\label{fig:}}
    \end{subfigure}
    \begin{subfigure}{.4\textwidth}
        \includegraphics[width=\textwidth]{rawfigs/star-cov2.pdf} %\\
        \caption{Transformed covariance matrix.}
    \end{subfigure}\caption{Matrices for star graph. Gray scale represents absolute values of
    matrix entries. \label{fig:star}}
\end{center}\end{figure}

\subsection{Grid graph}
For a $3\times3$ grid graph, with nodes ordered across the rows, ones on the diagonal, and $\alpha$
on each edge, the precision matrix has the block Toeplitz form:
$$\alpha \left(
\begin{array}{ccc}
 C & I & 0 \\
 I & C & I \\
 0 & I & C \\
\end{array}
\right)$$
where 
$$C = \left(
\begin{array}{ccc}
 1/\alpha & 1 & 0 \\
 1 & 1/\alpha & 1 \\
 0 & 1 & 1/\alpha \\
\end{array}
\right)$$ and $\alpha < 1/4$ to guarantee positive-definiteness.

Then the covariance is
$$1/\alpha \left[
\begin{array}{ccc}
    C^{-1}(I - (2I-C^2)^{-1}) & (2I-C^2)^{-1} & -C^{-1}(2I-C^2)^{-1} \\
 (2I-C^2)^{-1} & -C(2I-C^2)^{-1} & (2I-C^2)^{-1} \\
    -C^{-1}(2I-C^2)^{-1} & (2I-C^2)^{-1} & C^{-1}(I - (2I-C^2)^{-1}) \\
\end{array}
\right].$$

As a specific example, let $\alpha = 1/6$. Then the original precision $\Gamma_\rho$ is
$$\left(\begin{array}{ccccccccc}
    1& \frac{1}{6} &0 & \frac{1}{6} &0& 0& 0& 0& 0\\
    \frac{1}{6} &1 & \frac{1}{6} &0&\frac{1}{6} &0 & 0 & 0& 0\\
    0& \frac{1}{6}& 1& 0& 0&\frac{1}{6} &0& 0& 0\\
    \frac{1}{6}& 0& 0& 1& \frac{1}{6}& 0& \frac{1}{6} &0& 0\\
    0& \frac{1}{6}& 0& \frac{1}{6}& 1& \frac{1}{6}& 0& \frac{1}{6}& 0\\
    0& 0& \frac{1}{6}& 0& \frac{1}{6}& 1& 0& 0& \frac{1}{6}\\
    0& 0& 0& \frac{1}{6}& 0& 0& 1&\frac{1}{6}& 0\\
    0& 0& 0& 0& \frac{1}{6}& 0& \frac{1}{6}& 1& \frac{1}{6}\\
    0& 0& 0& 0& 0& \frac{1}{6} & 0&\frac{1}{6}& 1\\
\end{array}\right),$$

and the original covariance $\Sigma_\rho$ is
$$\left( \begin{array}{ccccccccc} 
1.0651&-0.1954&0.0357&-0.1954&0.0714&-0.0189&0.0357&-0.0189&0.0063\\
-0.1954&1.1008&-0.1954&0.0714&-0.2143&0.0714&-0.0189&0.0420&-0.0189\\
0.0357&-0.1954&1.0651&-0.0189&0.0714&-0.1954&0.0063&-0.0189&0.0357\\
-0.1954&0.0714&-0.0189&1.1008&-0.2143&0.0420&-0.1954&0.0714&-0.0189\\
0.0714&-0.2143&0.0714&-0.2143&1.1429&-0.2143&0.0714&-0.2143&0.0714\\
-0.0189&0.0714&-0.1954&0.0420&-0.2143&1.1008&-0.0189&0.0714&-0.1954\\
0.0357&-0.0189&0.0063&-0.1954&0.0714&-0.0189&1.0651&-0.1954&0.0357\\
-0.0189&0.0420&-0.0189&0.0714&-0.2143&0.0714&-0.1954&1.1008&-0.1954\\
0.0063&-0.0189&0.0357&-0.0189&0.0714&-0.1954&0.0357&-0.1954&1.0651\\
\end{array}\right).$$

After applying the diagonal transformation $f(x) = \sin(x)$, the covariance matrix $\Sigma_\pi$ is given by
$$\left( \begin{array}{ccccccccc} 
0.4406&-0.0666&0.0123&-0.0666&0.0237&-0.0064&0.0123&-0.0064&0.0022\\
-0.0666&0.4447&-0.0666&0.0238&-0.0703&0.0238&-0.0064&0.0140&-0.0064\\
0.0123&-0.0666&0.4406&-0.0064&0.0237&-0.0666&0.0022&-0.0064&0.0123\\
-0.0666&0.0238&-0.0064&0.4447&-0.0703&0.0140&-0.0666&0.0238&-0.0064\\
0.0237&-0.0703&0.0237&-0.0703&0.4491&-0.0703&0.0237&-0.0703&0.0237\\
-0.0064&0.0238&-0.0666&0.0140&-0.0703&0.4447&-0.0064&0.0238&-0.0666\\
0.0123&-0.0064&0.0022&-0.0666&0.0237&-0.0064&0.4406&-0.0666&0.0123\\
-0.0064&0.0140&-0.0064&0.0238&-0.0703&0.0238&-0.0666&0.4447&-0.0666\\
0.0022&-0.0064&0.0123&-0.0064&0.0237&-0.0666&0.0123&-0.0666&0.4406\\
\end{array}\right).$$
Following our theory in Section~\ref{sec:inv}, the diagonals again scale like $\frac{1}{e}\sinh(1) \sim
0.4323$, and the off-diagonal entries scale like $\frac{1}{e}\sinh\left(\frac{1}{6}\right) \sim 0.0616$.

Finally, the transformed precision $\Gamma_\pi$ is given by
$$\left( \begin{array}{ccccccccc} 
2.3718&0.3325&-0.0099&0.3325&-0.0197&0.0011&-0.0099&0.0011&-0.0001\\
0.3325&2.4035&0.3325&-0.0199&0.3332&-0.0199&0.0011&-0.0108&0.0011\\
-0.0099&0.3325&2.3718&0.0011&-0.0197&0.3325&-0.0001&0.0011&-0.0099\\
0.3325&-0.0199&0.0011&2.4035&0.3332&-0.0108&0.3325&-0.0199&0.0011\\
-0.0197&0.3332&-0.0197&0.3332&2.4392&0.3332&-0.0197&0.3332&-0.0197\\
0.0011&-0.0199&0.3325&-0.0108&0.3332&2.4035&0.0011&-0.0199&0.3325\\
-0.0099&0.0011&-0.0001&0.3325&-0.0197&0.0011&2.3718&0.3325&-0.0099\\
0.0011&-0.0108&0.0011&-0.0199&0.3332&-0.0199&0.3325&2.4035&0.3325\\
-0.0001&0.0011&-0.0099&0.0011&-0.0197&0.3325&-0.0099&0.3325&2.3718\\
\end{array}\right).$$
As expected, the diagonal entries are scaled by $1/\kappa \sim 2.314$ and the off-diagonal entries
are scaled by $\frac{\lambda}{\kappa^2} (\frac{1}{6}) \sim 0.328$.  In this example, all entries that
were zero in the original precision are now less than $0.02$.

Similar to the other examples, Figure~\ref{fig:grid} shows that the starting and transformed
precision matrices of the grid graph are highly similar in nature.
\begin{figure}[h!t]   \begin{center}
    \begin{subfigure}{.4\textwidth}
        \includegraphics[width=\textwidth]{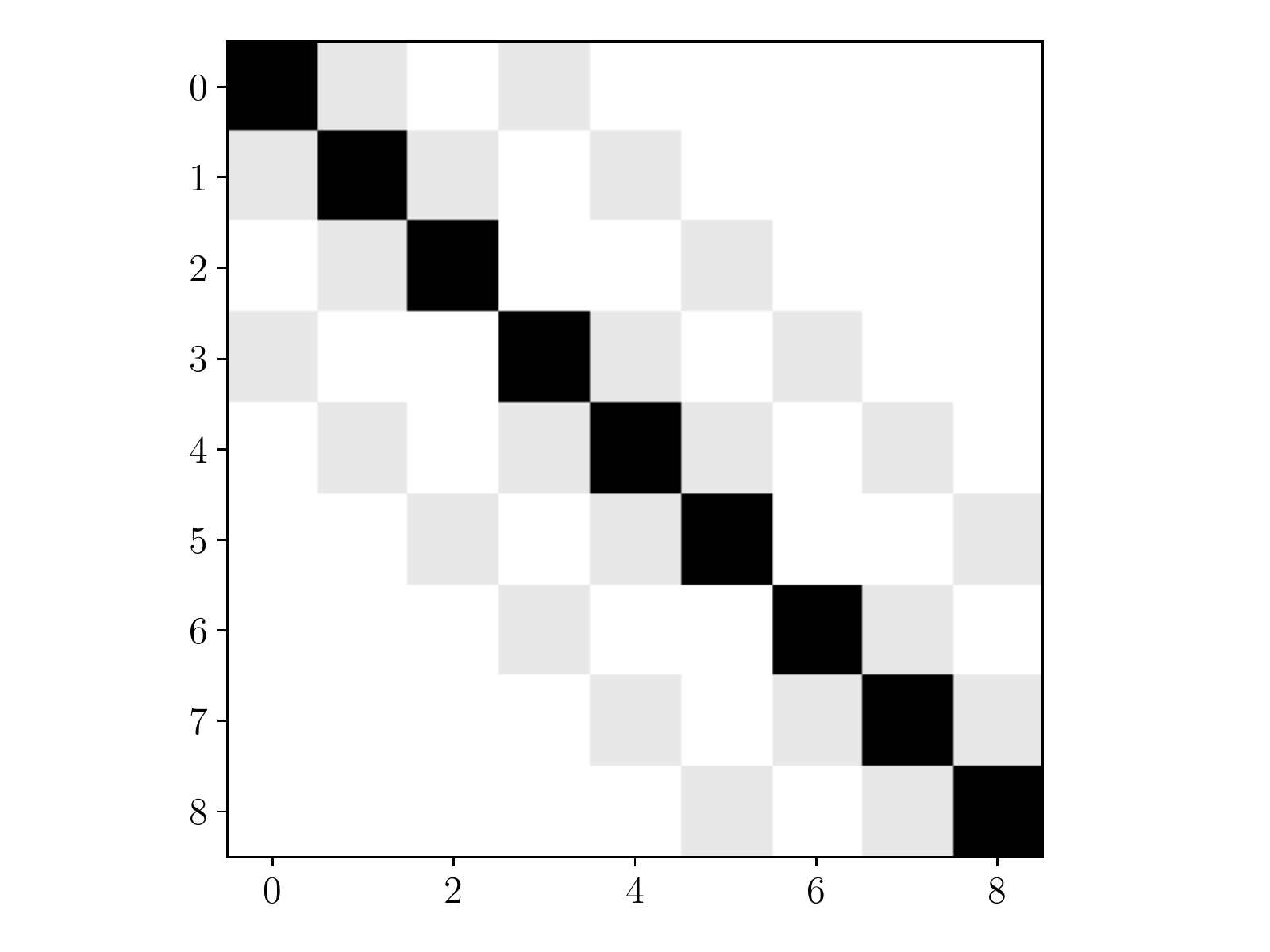} %\\
        \caption{ Starting precision matrix. \label{fig:prec-p5}}
    \end{subfigure}
    \begin{subfigure}{.4\textwidth}
        \includegraphics[width=\textwidth]{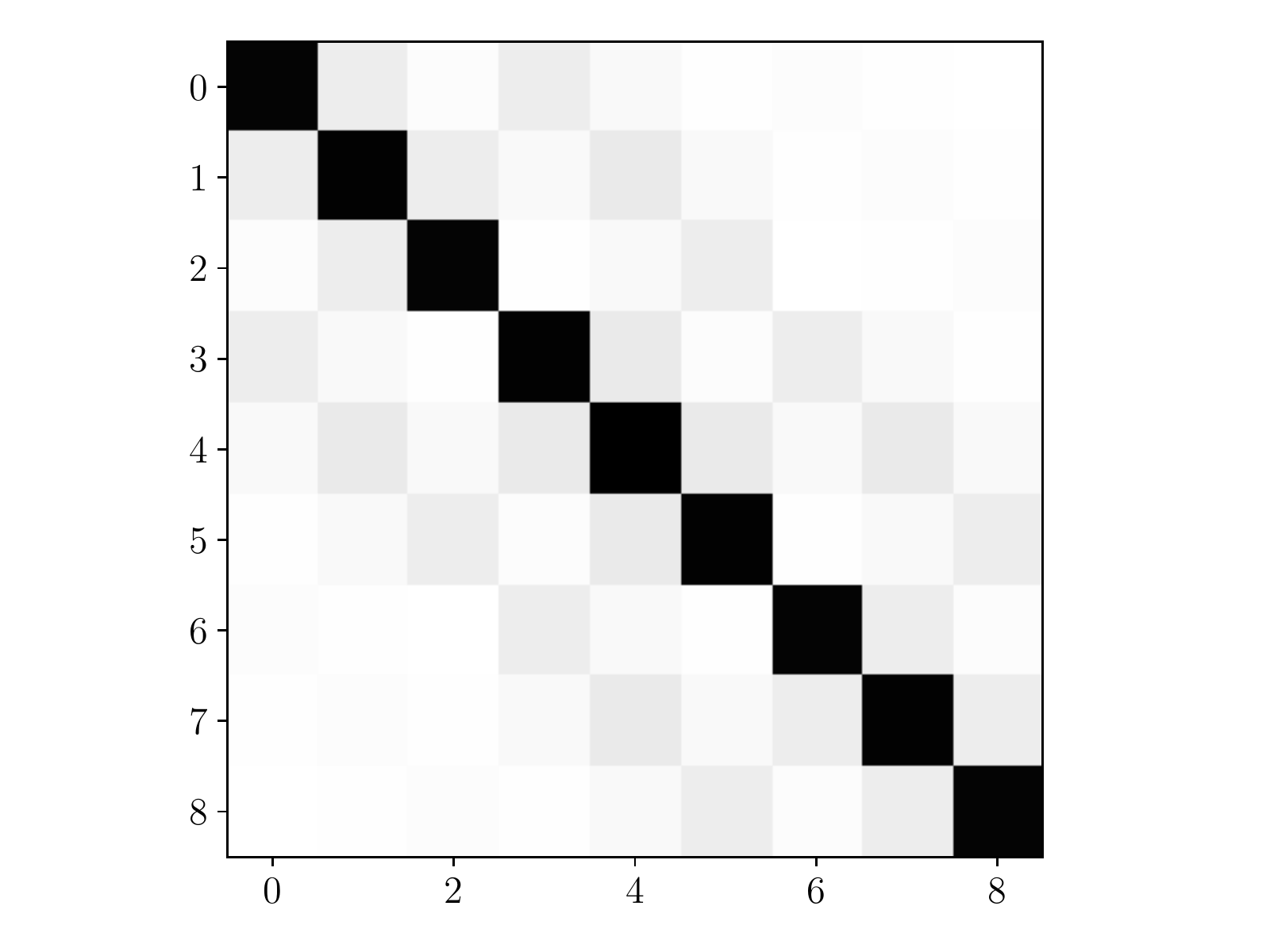} %\\
        \caption{ Starting covariance matrix.       \label{fig:cov-p5}}
    \end{subfigure}
    \begin{subfigure}{.4\textwidth}
        \includegraphics[width=\textwidth]{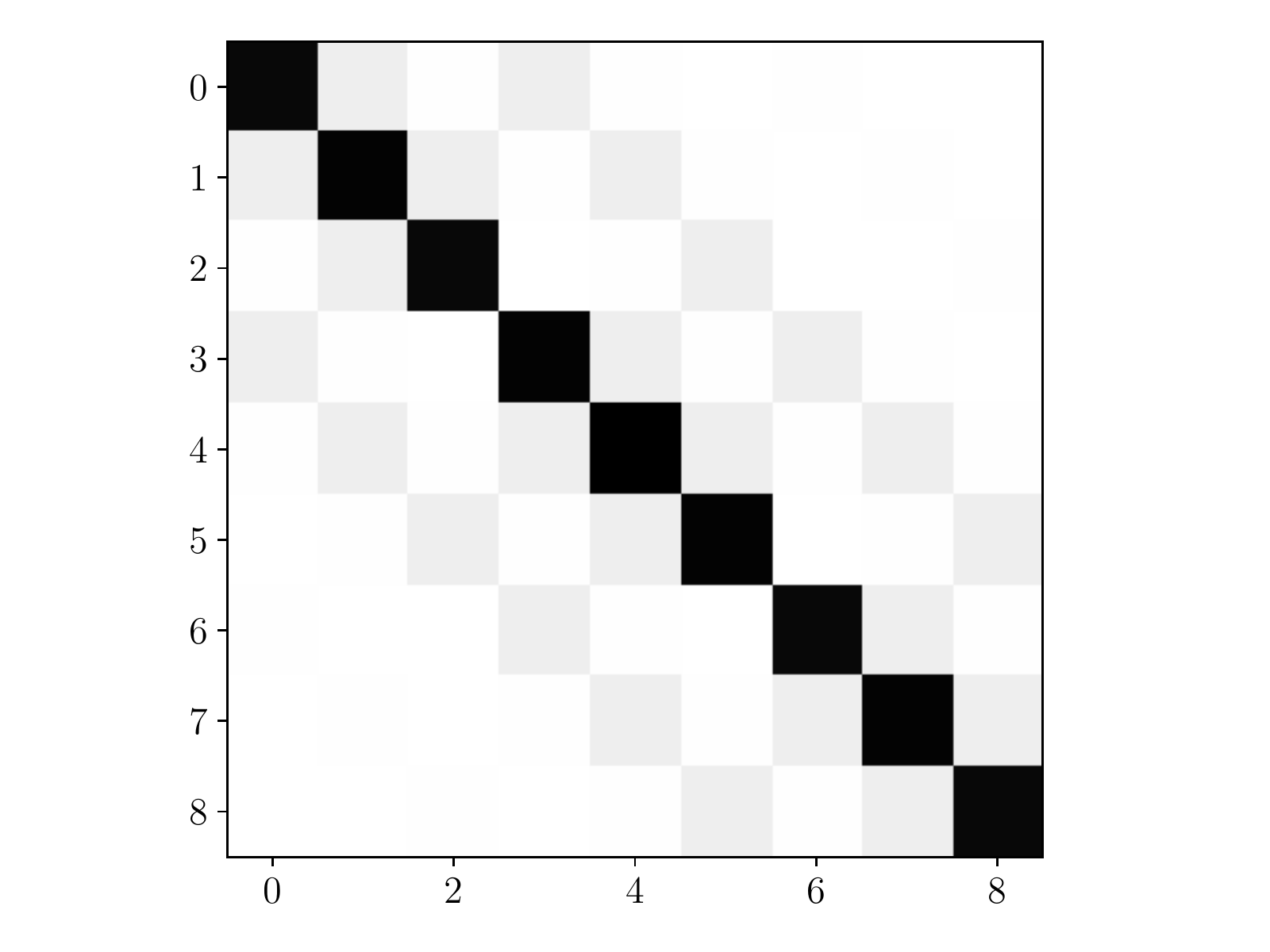} %\\
        \caption{ Transformed precision matrix.
            \label{fig:prec-p5-transf}}
    \end{subfigure}
    \begin{subfigure}{.4\textwidth}
        \includegraphics[width=\textwidth]{rawfigs/grid-cov2.pdf} %\\
        \caption{ Transformed covariance matrix.        \label{fig:cov-p5-transf}}
    \end{subfigure}\caption{Matrices for grid graph with $\alpha = 1/6$. Gray scale represent absolute values of
    matrix entries. \label{fig:grid}}
\end{center}\end{figure}

%\begin{figure}[h!t]   \begin{center}
%    \begin{subfigure}{.4\textwidth}
%        \includegraphics[width=\textwidth]{rawfigs/prec-p10.pdf} \\
%        \caption{ ?}
%        \label{fig:prec-p5}
%    \end{subfigure}
%    \begin{subfigure}{.4\textwidth}
%        \includegraphics[width=\textwidth]{rawfigs/cov-p10.pdf} \\
%        \caption{ ? }
%        \label{fig:cov-p5}
%    \end{subfigure}
%\end{center}\end{figure}

\section{Conclusion}\label{sec:con} This work proves that, under mild assumptions, nonlinear
diagonal transformations of a multivariate normal preserve sparsity, exactly in the transformed
covariance matrix and approximately in the transformed precision matrix. These results substantiate
covariance or inverse covariance estimation as a means to identify independence properties, even
when the distributions are non-Gaussian.
%REM: Check above to make sure consistent with RSB's note

In the field of learning undirected graphical models, nonparanormal distributions are often used as
non-Gaussian test cases for learning algorithms: the graph does not change under the transformation
(and so inherits the same graph that is prescribed for the multivariate normal vector), despite the marginal
distributions being clearly non-Gaussian \cite{liu2009nonparanormal}. Our previous work showed
numerically that assuming---incorrectly---that the nonparanormal data is in fact Gaussian does not
significantly impair graph learning \cite{baptista2021learning}. This was surprising at the time,
but the current work shows how the precision matrix still encodes conditional independence
structure: With respect to the undirected graph, the nonparanormal distribution behaves like a
Gaussian.

Several avenues of future work emerge. First, the analysis here can be extended to even
functions $f$, and to transformations that apply non-identical functions to each element of $X$, i.e., $Y_i =
f^i(X_i)$ where $f^i \neq f^j$.  Second, with this mathematical foundation, a notion of approximate
or weak conditional independence can guide graph learning algorithms, so that sparse graphs are
found up to some acceptable tolerance of weak conditional independence. Third, the results here
could accelerate matrix estimation procures, since a matrix with known sparsity is easier to
estimate than one without.

\begin{appendix}
\section{Extra computations}\label{sec:app}
{\bf Linear term.} 
Here we compute only the coefficient for the linear term in~\eqref{eq:multivariate_moment} with
respect to $\sigma_{ij}$ with $\sigma_{ii} = 1 \,\forall \, i$.
\begin{equation*}
%\sigma_{ij} \left(
\sum_{\substack{n \geq 2\\\text{even}\,\,n}} \sum_{\substack{p=1\\\text{odd}\,\,p}}^{n-1} \frac{ f^{(p)}(0)f^{(n-p)}(0)p!! (n -p)!!}{p!(n-p)!}.%\right).
\end{equation*}
Using the formula for odd integers $p$ $$p!! = \frac{(p+1)!}{(\frac{p+1}{2})! 2^{(p+1)/2}},$$ 
this is 
\begin{equation*}
%\sigma_{ij} \left(
\sum_{\substack{n \geq 2\\ n \text{ even}}} 2^{-n/2 -1}\sum_{\substack{p=1\\p \text{ odd}}}^{n-1} \frac{(p+1)
    f^{(p)}(0)(n - p+1)f^{(n-p)}(0)}{(\frac{p+1}{2})!(\frac{n-p+1}{2})!}.%\right).
\end{equation*}
Let $p = 2k+1$. This becomes
\begin{equation*}
%\sigma_{ij} \left(
\sum_{\substack{n \geq 2\\ n \text{ even}}}2^{-n/2} \sum_{k =0}^{n/2-1}  \frac{ f^{(2k+1)}(0)(n
    -2k)f^{(n-2k-1)}(0)}{(k)!(\frac{n-2k}{2})!}.%\right).
\end{equation*}
Now let $n = 2j$ and we have
\begin{equation} \label{eq:series_expansion_F2}
%\sigma_{ij} \left(
\sum_{j \geq 1}2^{-(j-1)} \sum_{k=0}^{j-1}  \frac{ f^{(2k+1)}(0)f^{(2j-2k-1)}(0)}{k!(j-k-1)!}.%\right).
\end{equation}

Now suppose that 
$$f(x) = \sum_{k= 0}^{\infty} \frac{ f^{(2k+1)}(0)x^{2k+1}}{(2k+1)!}.$$ Define a new function $F$ given by
$$F(x) = \sum_{k= 0}^{\infty} \frac{ f^{(2k+1)}(0)x^{k}}{k!}.$$
The sum
$$\sum_{k=0}^{j-1}  \frac{f^{(2k+1)}(0)f^{(2j-2k+1)}(0)}{k!(j-k)!}$$ is the coefficient of the
$j-1$st term in the series for $F^{2}(x)$. Hence, the series in~\eqref{eq:series_expansion_F2}
equals
\begin{equation} \label{eq:transformed_cov_linearterm}
%\sigma_{ij}\,F^{2}(2^{-1}).
F^{2}(2^{-1}),
\end{equation}
and this corresponds to the coefficient of the linear term for $\sigma_{ij}$.
%\vspace{1pt}

\begin{example}
For the function $f(x) = \sin(x)$, we have
$$ f(x) = \sin(x) = \sum_{k = 0}^{\infty} \frac{ (-1)^{k} x^{2k +1}}{(2k+1)!}, $$ and so
$$F(x) = \sum_{k = 0}^{\infty} \frac{ (-1)^{k} x^{k}}{k!} = e^{-x}.$$
Using the expression in~\eqref{eq:transformed_cov_linearterm} we have that the coefficient for the
linear term is $F^{2}(2^{-1}) = 1/e$. This coefficient scales $\sigma_{ij}$ in entry $(i,j)$ of the transformed covariance.
\end{example}

\end{appendix}

\section*{Acknowledgments}
The first author was supported by the Johnson\&Johnson Foundation and
its Women in STEM2D Scholars Program. The second author was supported by the Department of
Energy, Office of Advanced Scientific Computing Research, AEOLUS (Advances in Experimental design,
Optimal control, and Learning for Uncertain complex Systems) center. The third author was supported
in part by the American Institute of Mathematics and the NSF grant DMS-1929334.

\bibliographystyle{plain}
\bibliography{references.bib}

\end{document}